\tikzstyle arrowstyle=[scale=1]
\tikzstyle directed=[postaction={decorate,
decoration={markings,mark=at position .65 with {\arrow[arrowstyle]{stealth}}}}]
\tikzset{mid vert/.style={/utils/exec=\tikzset{every node/.append style={outer sep=0.8ex}},
postaction=decorate,decoration={markings,
mark=at position 0.5 with {\draw[-] (0,#1) -- (0,-#1);}}},
mid vert/.default=0.75ex}
\begin{document}

\title
[Cointeracting bialgebras of hypergraphs and chromatic polynomials]
{Eight times four bialgebras of hypergraphs, cointeractions, and chromatic
polynomials}

\author{Kurusch Ebrahimi-Fard}
\address{Department of Mathematics, NTNU\\
         Norway}
\email{kurusch.ebrahimi-fard@ntnu.no}

\author{Gunnar Fl{\o}ystad}
\address{Matematisk institutt,
  Universitetet i Bergen \\
         Postboks 7803,
         5020 Bergen \\
       Norway}
\email{gunnar@mi.uib.no}

\begin{abstract}
The bialgebra of hypergraphs, a generalization of
W. Schmitt's Hopf algebra of graphs \cite{Schmitt},
is shown to have a cointeracting
bialgebra, giving a double bialgebra in the sense of L.~Foissy,
who has recently proven \cite{Fo22} there is then a unique double bialgebra morphism to  the double bialgebra structure on the polynomial ring ${\mathbb Q}[x]$.

  We show that the associated polynomial is the
  hypergraph chromatic polynomial. Moreover, hypergraphs occur in
  quartets, i.e., there is a dual, a complement, and a dual complement hypergraph.
  These correspondences are involutions and give rise to three other
  double bialgebras, and three more chromatic polynomials. In all
  we give eight quartets of bialgebras, which includes
  recent bialgebras of hypergraphs introduced by M.~Aguiar and F.~Ardila
  \cite{AA},  and by L.~Foissy \cite{Fo-Hyp}.
\end{abstract}


\keywords{hypergraph, double bialgebra, chromatic polynomial, cointeracting
  bialgebras,
extraction/contraction}
\subjclass[2010]{Primary: 16T30, 05C15 ; Secondary: 16T10}
\date{\today}


\theoremstyle{plain}
\newtheorem{theorem}{Theorem}[section]
\newtheorem{corollary}[theorem]{Corollary}
\newtheorem*{main}{Main Theorem}
\newtheorem{lemma}[theorem]{Lemma}
\newtheorem{proposition}[theorem]{Proposition}
\newtheorem{conjecture}[theorem]{Conjecture}
\newtheorem{theoremp}{Theorem}

\theoremstyle{definition}
\newtheorem{definition}[theorem]{Definition}
\newtheorem{fact}[theorem]{Fact}
\newtheorem{obs}[theorem]{Observation}
\newtheorem{definisjon}[theorem]{Definisjon}
\newtheorem{problem}[theorem]{Problem}
\newtheorem{condition}[theorem]{Condition}

\theoremstyle{remark}
\newtheorem{notation}[theorem]{Notation}
\newtheorem{remark}[theorem]{Remark}
\newtheorem{example}[theorem]{Example}
\newtheorem{claim}{Claim}
\newtheorem{observation}[theorem]{Observation}
\newtheorem{question}[theorem]{Question}


\newcommand{\psp}[1]{{{\bf P}^{#1}}}
\newcommand{\psr}[1]{{\bf P}(#1)}
\newcommand{\op}{{\mathcal O}}
\newcommand{\opw}{\op_{\psr{W}}}

\newcommand{\ini}[1]{\text{in}(#1)}
\newcommand{\gin}[1]{\text{gin}(#1)}
\newcommand{\kr}{{\Bbbk}}
\newcommand{\pd}{\partial}
\newcommand{\vardel}{\partial}
\renewcommand{\tt}{{\bf t}}


\newcommand{\coh}{{{\text{{\rm coh}}}}}


\newcommand{\modv}[1]{{#1}\text{-{mod}}}
\newcommand{\modstab}[1]{{#1}-\underline{\text{mod}}}

\newcommand{\sut}{{}^{\tau}}
\newcommand{\sumit}{{}^{-\tau}}
\newcommand{\til}{\thicksim}

\newcommand{\totp}{\text{Tot}^{\prod}}
\newcommand{\dsum}{\bigoplus}
\newcommand{\dprod}{\prod}
\newcommand{\lsum}{\oplus}
\newcommand{\lprod}{\Pi}

\newcommand{\La}{{\Lambda}}

\newcommand{\sirstj}{\circledast}

\newcommand{\she}{\EuScript{S}\text{h}}
\newcommand{\cm}{\EuScript{CM}}
\newcommand{\cmd}{\EuScript{CM}^\dagger}
\newcommand{\cmri}{\EuScript{CM}^\circ}
\newcommand{\cler}{\EuScript{CL}}
\newcommand{\clerd}{\EuScript{CL}^\dagger}
\newcommand{\clerri}{\EuScript{CL}^\circ}
\newcommand{\gor}{\EuScript{G}}
\newcommand{\cF}{\mathcal{F}}
\newcommand{\cG}{\mathcal{G}}
\newcommand{\cM}{\mathcal{M}}
\newcommand{\cE}{\mathcal{E}}
\newcommand{\cI}{\mathcal{I}}
\newcommand{\cP}{\mathcal{P}}
\newcommand{\cK}{\mathcal{K}}
\newcommand{\cS}{\mathcal{S}}
\newcommand{\cC}{\mathcal{C}}
\newcommand{\cO}{\mathcal{O}}
\newcommand{\cJ}{\mathcal{J}}
\newcommand{\cU}{\mathcal{U}}
\newcommand{\cQ}{\mathcal{Q}}
\newcommand{\cX}{\mathcal{X}}
\newcommand{\cY}{\mathcal{Y}}
\newcommand{\cZ}{\mathcal{Z}}
\newcommand{\cV}{\mathcal{V}}

\newcommand{\mm}{\mathfrak{m}}

\newcommand{\dlim} {\varinjlim}
\newcommand{\ilim} {\varprojlim}

\newcommand{\CM}{\text{CM}}
\newcommand{\Mon}{\text{Mon}}


\newcommand{\Kom}{\text{Kom}}


\newcommand{\EH}{{\mathbf H}}
\newcommand{\res}{\text{res}}
\newcommand{\Hom}{\text{Hom}}
\newcommand{\inhom}{{\underline{\text{Hom}}}}
\newcommand{\Ext}{\text{Ext}}
\newcommand{\Tor}{\text{Tor}}
\newcommand{\ghom}{\mathcal{H}om}
\newcommand{\gext}{\mathcal{E}xt}
\newcommand{\im}{\text{im}\,}
\newcommand{\codim} {\text{codim}\,}
\newcommand{\resol}{\text{resol}\,}
\newcommand{\rank}{\text{rank}\,}
\newcommand{\lpd}{\text{lpd}\,}
\newcommand{\coker}{\text{coker}\,}
\newcommand{\supp}{\text{supp}\,}
\newcommand{\Ad}{A_\cdot}
\newcommand{\Bd}{B_\cdot}
\newcommand{\Fd}{F_\cdot}
\newcommand{\Gd}{G_\cdot}


\newcommand{\sus}{\subseteq}
\newcommand{\sups}{\supseteq}
\newcommand{\pil}{\rightarrow}
\newcommand{\vpil}{\leftarrow}
\newcommand{\rpil}{\leftarrow}
\newcommand{\lpil}{\longrightarrow}
\newcommand{\inpil}{\hookrightarrow}
\newcommand{\pils}{\twoheadrightarrow}
\newcommand{\projpil}{\dashrightarrow}
\newcommand{\dotpil}{\dashrightarrow}
\newcommand{\adj}[2]{\overset{#1}{\underset{#2}{\rightleftarrows}}}
\newcommand{\mto}[1]{\stackrel{#1}\longrightarrow}
\newcommand{\vmto}[1]{\stackrel{#1}\longleftarrow}
\newcommand{\mtoelm}[1]{\stackrel{#1}\mapsto}
\newcommand{\bihom}[2]{\overset{#1}{\underset{#2}{\rightleftarrows}}}
\newcommand{\eqv}{\Leftrightarrow}
\newcommand{\impl}{\Rightarrow}

\newcommand{\iso}{\cong}
\newcommand{\te}{\otimes}
\newcommand{\into}[1]{\hookrightarrow{#1}}
\newcommand{\ekv}{\Leftrightarrow}
\newcommand{\equi}{\simeq}
\newcommand{\isopil}{\overset{\cong}{\lpil}}
\newcommand{\equipil}{\overset{\equi}{\lpil}}
\newcommand{\ispil}{\isopil}
\newcommand{\vvi}{\langle}
\newcommand{\hvi}{\rangle}
\newcommand{\susneq}{\subsetneq}
\newcommand{\sgn}{\text{sign}}


\newcommand{\xd}{\check{x}}
\newcommand{\ortog}{\bot}
\newcommand{\tL}{\tilde{L}}
\newcommand{\tM}{\tilde{M}}
\newcommand{\tH}{\tilde{H}}
\newcommand{\tvH}{\widetilde{H}}
\newcommand{\tvh}{\widetilde{h}}
\newcommand{\tV}{\tilde{V}}
\newcommand{\tS}{\tilde{S}}
\newcommand{\tT}{\tilde{T}}
\newcommand{\tR}{\tilde{R}}
\newcommand{\tf}{\tilde{f}}
\newcommand{\ts}{\tilde{s}}
\newcommand{\tp}{\tilde{p}}
\newcommand{\tr}{\tilde{r}}
\newcommand{\tfst}{\tilde{f}_*}
\newcommand{\empt}{\emptyset}
\newcommand{\bfa}{{\mathbf a}}
\newcommand{\bfb}{{\mathbf b}}
\newcommand{\bfd}{{\mathbf d}}
\newcommand{\bfl}{{\mathbf \ell}}
\newcommand{\bfx}{{\mathbf x}}
\newcommand{\bfm}{{\mathbf m}}
\newcommand{\bfv}{{\mathbf v}}
\newcommand{\bft}{{\mathbf t}}
\newcommand{\bbfa}{{\mathbf a}^\prime}
\newcommand{\la}{\lambda}
\newcommand{\bfen}{{\mathbf 1}}
\newcommand{\bfe}{{\mathbf 1}}
\newcommand{\ep}{\epsilon}
\newcommand{\en}{r}
\newcommand{\tu}{s}
\newcommand{\Sym}{\text{Sym}}

\newcommand{\ome}{\omega_E}

\newcommand{\bevis}{{\bf Proof. }}
\newcommand{\demofin}{\qed \vskip 3.5mm}
\newcommand{\nyp}[1]{\noindent {\bf (#1)}}
\newcommand{\demo}{{\it Proof. }}
\newcommand{\demodone}{\demofin}
\newcommand{\parg}{{\vskip 2mm \addtocounter{theorem}{1}  
                   \noindent {\bf \thetheorem .} \hskip 1.5mm }}

\newcommand{\lcm}{{\text{lcm}}}


\newcommand{\dl}{\Delta}
\newcommand{\cdel}{{C\Delta}}
\newcommand{\cdelp}{{C\Delta^{\prime}}}
\newcommand{\dlst}{\Delta^*}
\newcommand{\Sdl}{{\mathcal S}_{\dl}}
\newcommand{\lk}{\text{lk}}
\newcommand{\lkd}{\lk_\Delta}
\newcommand{\lkp}[2]{\lk_{#1} {#2}}
\newcommand{\del}{\Delta}
\newcommand{\delr}{\Delta_{-R}}
\newcommand{\dd}{{\dim \del}}
\newcommand{\Del}{\Delta}

\renewcommand{\aa}{{\bf a}}
\newcommand{\bb}{{\bf b}}
\newcommand{\cc}{{\bf c}}
\newcommand{\xx}{{\bf x}}
\newcommand{\yy}{{\bf y}}
\newcommand{\zz}{{\bf z}}
\newcommand{\mv}{{\xx^{\aa_v}}}
\newcommand{\mF}{{\xx^{\aa_F}}}

\newcommand{\Symm}{\text{Sym}}
\newcommand{\pnm}{{\bf P}^{n-1}}
\newcommand{\opnm}{{\go_{\pnm}}}
\newcommand{\ompnm}{\omega_{\pnm}}

\newcommand{\pn}{{\bf P}^n}
\newcommand{\hele}{{\mathbb Z}}
\newcommand{\nat}{{\mathbb N}}
\newcommand{\rasj}{{\mathbb Q}}
\newcommand{\bfone}{{\mathbf 1}}

\newcommand{\dt}{\bullet}
\newcommand{\disk}{\scriptscriptstyle{\bullet}}

\newcommand{\cxF}{F_\dt}
\newcommand{\pol}{f}

\newcommand{\Rn}{{\mathbb R}^n}
\newcommand{\An}{{\mathbb A}^n}
\newcommand{\frg}{\mathfrak{g}}
\newcommand{\PW}{{\mathbb P}(W)}

\newcommand{\pos}{{\mathcal Pos}}
\newcommand{\g}{{\gamma}}

\newcommand{\Vaa}{V_0}
\newcommand{\Bp}{B^\prime}
\newcommand{\Bpp}{B^{\prime \prime}}
\newcommand{\bbp}{\mathbf{b}^\prime}
\newcommand{\bbpp}{\mathbf{b}^{\prime \prime}}
\newcommand{\bp}{{b}^\prime}
\newcommand{\bpp}{{b}^{\prime \prime}}

\newcommand{\oLa}{\overline{\Lambda}}
\newcommand{\ov}[1]{\overline{#1}}
\newcommand{\ovv}[1]{\overline{\overline{#1}}}
\newcommand{\tm}{\tilde{m}}
\newcommand{\po}{\bullet}

\newcommand{\surj}[1]{\overset{#1}{\twoheadrightarrow}}
\newcommand{\Supp}{\text{Supp}}

\def\CC{{\mathbb C}}
\def\GG{{\mathbb G}}
\def\ZZ{{\mathbb Z}}
\def\NN{{\mathbb N}}
\def\RR{{\mathbb R}}
\def\OO{{\mathbb O}}
\def\QQ{{\mathbb Q}}
\def\VV{{\mathbb V}}
\def\PP{{\mathbb P}}
\def\EE{{\mathbb E}}
\def\FF{{\mathbb F}}
\def\AA{{\mathbb A}}

\renewcommand{\SS}{{\mathcal S}}
\newcommand{\pr}{\preceq}
\newcommand{\su}{\succeq}
\newcommand{\bX}{{\mathbf X}}
\newcommand{\bY}{{\mathbf Y}}
\newcommand{\bA}{{\mathbf A}}
\newcommand{\bB}{{\mathbf B}}
\newcommand{\bH}{{\mathbf H}}
\renewcommand{\op}{{\text op}}
\newcommand{\Spol}{S}

\newcommand{\ben}{{\bf 1}}
\newcommand{\een}{{\bf 1}}
\newcommand{\id}{{\bf I}}
\newcommand{\ppil}{\twoheadrightarrow}
\newcommand{\eps}{\epsilon}
\newcommand{\vep}{\varepsilon}
\newcommand{\vareps}{\varepsilon}
\newcommand{\kk}{\Bbbk}
\newcommand{\Hy}{H}
\newcommand{\He}{H^\emptyset}
\newcommand{\Hyt}{H^\prime}
\newcommand{\Hytl}[1]{H^{\prime #1}}
\newcommand{\co}{\text{cor}}
\newcommand{\llin}{\raisebox{1pt}{\scalebox{1}[0.6]{$\mid$}}}
\newcommand{\promap}{\mathrlap{{\hskip 2.8mm}{\llin}}{\lpil}}

\maketitle


\newcommand{\cherry}{
\begin{tikzpicture}[scale=.5, vertices/.style={draw, fill=black, circle, inner sep=1.5pt}]
\draw [help lines, white] (-1,0) grid (1,1);
\node [vertices] (a) at (-0.5,0) {};
\node [vertices] (b) at (0,0.7) {};
\node [vertices] (c) at (0.5,0) {};

\foreach \to/\from in {a/b, b/c}
\draw (\to)--(\from);
\end{tikzpicture}
}

\newcommand{\cher}{
\begin{tikzpicture}[scale=.5, vertices/.style={draw, fill=black, circle, inner sep=1.5pt}]
\draw [help lines, white] (-1,0) grid (1,1);
\coordinate (a) at (-0.5,0) {};
\node [vertices] (b) at (0,0.7) {};
\node [vertices] (c) at (0.5,0) {};

\foreach \to/\from in {a/b, b/c}
\draw (\to)--(\from);
\end{tikzpicture}
}

\newcommand{\trev}{
\begin{tikzpicture}[scale=.5, vertices/.style={draw, fill=black, circle, inner sep=1.5pt}]
\draw [help lines, white] (-1,0) grid (1,1);
\node [vertices] (a) at (-0.5,0) {};
\node [vertices] (b) at (0,0.7) {};
\node [vertices] (c) at (0.5,0) {};

\end{tikzpicture}
}

\newcommand{\trekant}{
  \begin{tikzpicture}[scale=.5, vertices/.style={draw, fill, circle, inner sep=1.5pt}]

\draw [fill=blue, opacity=0.5] (-0.5,0)--(0,0.7)--(0.5,0)--(-0.5,0);

\node [vertices] (a) at (-0.5,0) {};
\node [vertices] (b) at (0,0.7) {};
\node [vertices] (c) at (0.5,0) {};

\foreach \to/\from in {a/b, b/c, c/a}
\draw (\to)--(\from);
\end{tikzpicture}
}

\newcommand{\trekantKant}{
  \begin{tikzpicture}[scale=.5, vertices/.style={draw, fill, circle, inner sep=1.5pt}]

\draw [fill=blue, opacity=0.5] (-0.5,0)--(0,0.7)--(0.5,0)--(-0.5,0);

\node [vertices] (a) at (-0.5,0) {};
\node [vertices] (b) at (0,0.7) {};
\node [vertices] (c) at (0.5,0) {};

\draw (a) to[out=90,in=200,looseness=1] (b);

\foreach \to/\from in {a/b, b/c, c/a}
\draw (\to)--(\from);
\end{tikzpicture}
}

\newcommand{\lokke}{
  \begin{tikzpicture}[scale=.5, vertices/.style={draw, fill, circle, inner sep=1.5pt}]

\node [vertices] (a) at (0,0) {};

\draw (a) to[out=120,in=60,looseness=8] (a);

\end{tikzpicture}
}

\newcommand{\linr}{
\begin{tikzpicture}[scale=.5, vertices/.style={draw, fill=black, circle, inner sep=1.5pt}]
\node [vertices] (a) at (-0.5,0) {};
\node [vertices] (b) at (0,0.7) {};

\foreach \to/\from in {a/b}
\draw (\to)--(\from);
\end{tikzpicture}
}

\newcommand{\linl}{
\begin{tikzpicture}[scale=.5, vertices/.style={draw, fill=black, circle, inner sep=1.5pt}]
\node [vertices] (c) at (0.5,0) {};
\node [vertices] (b) at (0,0.7) {};

\foreach \to/\from in {c/b}
\draw (\to)--(\from);
\end{tikzpicture}
}

\newcommand{\linvert}{
\begin{tikzpicture}[scale=.5, vertices/.style={draw, fill=black, circle, inner sep=1.5pt}]
\node [vertices] (a) at (0,0) {};
\node [vertices] (b) at (0,0.7) {};

\foreach \to/\from in {a/b}
\draw (\to)--(\from);
\end{tikzpicture}
}

\newcommand{\tobull}{
\begin{tikzpicture}[scale=.5, vertices/.style={draw, fill=black, circle, inner sep=1.5pt}]
\node [vertices] (a) at (0,0) {};
\node [vertices] (b) at (0,0.7) {};

\end{tikzpicture}
}

\newcommand{\linP}{
\begin{tikzpicture}[scale=.5, vertices/.style={draw, fill=black, circle, inner sep=1.5pt}]
\node [vertices] (a) at (0,0) {};
\node [vertices] (b) at (0,0.7) {};
\node [vertices] (c) at (0.7,0) {};

\foreach \to/\from in {a/b}
\draw (\to)--(\from);
\end{tikzpicture}
}

\newcommand{\pointl}{
\begin{tikzpicture}[scale=.5, vertices/.style={draw, fill=black, circle, inner sep=1.5pt}]
\node (b) at (0,1) {};
\node [vertices] (c) at (0,0) {};

\foreach \to/\from in {c/b}
\draw (\to)--(\from);
\end{tikzpicture}
}

\newcommand{\edge}{
\begin{tikzpicture}[scale=.5, vertices/.style={draw, fill=black, circle, inner sep=1pt}]
\coordinate (a) at (-1,0) {};
\coordinate (b) at (0,0) {};
\coordinate (c) at (0,0.7) {};
\draw (b)--(c);
\end{tikzpicture}
}

\newcommand{\pointll}{
\begin{tikzpicture}[scale=.5, vertices/.style={draw, fill=black, circle, inner sep=1.5pt}]
\coordinate (a) at (-0.3,0.7) {};
\coordinate (b) at (0.3,0.7) {};
\node [vertices] (c) at (0,0) {};

\foreach \to/\from in {c/b, c/a}
\draw (\to)--(\from);
\end{tikzpicture}
}

\newcommand{\pointlh}{
\begin{tikzpicture}[scale=.5, vertices/.style={draw, fill=black, circle, inner sep=1.5pt}]
\node [vertices] (c) at (0.5,0) {};

\foreach \to/\from in {c/b}
\draw (\to)--(\from);
\end{tikzpicture}
}

\newcommand{\pointrh}{
\begin{tikzpicture}[scale=.5, vertices/.style={draw, fill=black, circle, inner sep=1.5pt}]
\node [vertices] (a) at (-0.5,0) {};

\foreach \to/\from in {a/b}
\draw (\to)--(\from);
\end{tikzpicture}
}

\newcommand{\point}{
\begin{tikzpicture}[scale=.5, vertices/.style={draw, fill=black, circle, inner sep=1.5pt}]
\node [vertices] (a) at (-0,0) {};

\end{tikzpicture}
}

\newcommand{\dpoint}{
\begin{tikzpicture}[scale=.5, vertices/.style={draw, fill=black, circle, inner sep=1.5pt}]
\node [vertices] (a) at (-0.3,0) {};
\node [vertices] (b) at (0.3,0) {};

\end{tikzpicture}
}

\newcommand{\dpointh}{
\begin{tikzpicture}[scale=.5, vertices/.style={draw, fill=black, circle, inner sep=1.5pt}]
\node [vertices] (a) at (-0.3,0) {};
\node [inner sep=2pt] (b) at (0,0.5){};
\node [vertices] (c) at (0.3,0) {};
\node (bb) at (0.2,0.2) {};

\foreach \to/\from in {a/b, c/b}
\draw (\to)--(\from);
\end{tikzpicture}
}

\newcommand{\cpointh}{
\begin{tikzpicture}[scale=.5, vertices/.style={draw, fill=black, circle, inner sep=1.5pt}]
\node [inner sep=2pt] (a) at (-0.5,0) {};
\node [vertices] (b) at (0,0.5) {};
\node (b1) at (-0.1,0.3) {};
\node (b2) at (0.1,0.3) {};

\foreach \to/\from in {b/a, b/c}
\draw (\to)--(\from);
\end{tikzpicture}
}

\newcommand{\linlh}{
\begin{tikzpicture}[scale=.5, vertices/.style={draw, fill=black, circle, inner sep=1.5pt}]
\node [inner sep=2pt] (a) at (-0.5,0) {};
\node [vertices] (b) at (0,0.7) {};
\node (b1) at (-0.1,0.5) {};
\node [vertices] (c) at (0.5,0) {};

\foreach \to/\from in {b/a, b/c}
\draw (\to)--(\from);
\end{tikzpicture}
}

\newcommand{\poly}{
\begin{center}
  \begin{tikzpicture}[dot/.style={draw,fill,circle,inner sep=2.3pt},scale=2]
\draw [help lines, white] (-1.5,-1.5) grid (1.5,1.2);


\draw [fill=blue, opacity=0.3] (0,1)--(.79,.63)--(-.79,.63)--(0,1);
\draw [fill=red, opacity=0.3] (.79,.63)--(-.43,-.9)--(-.79,.63)--(.79,.63);
\draw [fill=blue, opacity=0.3] (.79,.63)--(.43,-.9)--(-.43,-.9)--(.79,.63);
\draw [fill=blue, opacity=0.3] (-.43,-.9)--(-.79,.63)--(-.98,-.23)--(-.43,-.9);
\draw [fill=red, opacity=0.3] (.79,.63)--(.98,-.23)--(.43,-.9)--(.79,.63);

  \foreach \l [count=\n] in {0,1,2,3,4,5,6} {
    \pgfmathsetmacro\angle{90-360/7*(\n-1)}
      \node[dot] (n\n) at (\angle:1) {};
      \fill (\angle:1)  circle (0.09);
      }

\fill[red] (90:1) circle (0.065);
\fill[yellow] (141.43:1) circle (0.065);
\fill[red] (192.86:1) circle (0.065);
\fill[green] (244.29:1) circle (0.065);
\fill[red] (295.72:1) circle (0.065);
\fill[yellow] (347.15:1) circle (0.065);
\fill[blue] (398.58:1) circle (0.065);
      
  \foreach \l [count=\n] in {0,1,2,3,4,5,6} {
    \pgfmathsetmacro\angle{90-360/7*(\n-1)}
      \node (m\n) at (\angle:1.3) {$\n$};
  }
  \draw[thick] (n1) -- (n2) -- (n3) -- (n4) -- (n5) -- (n6) -- (n7) -- (n1);
  \draw[thick] (n2)--(n7)--(n5)--(n2)--(n4);
\end{tikzpicture}
\end{center}
}


\newcommand{\pointli}[2]
{ \raisebox{-2mm}{
\begin{tikzpicture}[scale=.5, vertices/.style={draw, fill=black, circle, inner sep=1.5pt}]
\coordinate (b) at (0,1) {};
\node [vertices, label=right: {$\scriptstyle{#1}$}] (c) at (0,0) {};
\coordinate [label=right: {$\scriptstyle{#2}$}] (a) at (-0.1,0.5) {};
\draw (c)--(b);
\end{tikzpicture}
}}

\newcommand{\pointe}[2]
{ \raisebox{-2mm}{
\begin{tikzpicture}[scale=.5, vertices/.style={draw, fill=black, circle, inner sep=1.5pt}]
  \coordinate (b) at (0,0) {};
  \coordinate (a) at (0,1) {};
\node [vertices, label=right:   {$\scriptstyle{#1}$}] (c) at (0.5,0){};
\coordinate [label=right: {$\scriptstyle{#2}$}] (d) at (-0.1,0.5){};
\draw (b)--(a);
\end{tikzpicture}
}}

\newcommand{\trekantl}[6]
{ \raisebox{-2mm}{
\begin{tikzpicture}[scale=0.8, vertices/.style={draw, fill=black, circle, inner sep=1.5pt}]
  \node [vertices, label=left:   {$\scriptstyle{#1}$}] (a) at (0,0) {};
  \node [vertices, label=above:   {$\scriptstyle{#2}$}] (b) at (0.65,1) {};
  \node [vertices, label=right:   {$\scriptstyle{#3}$}] (c) at (1.3,0) {};
\coordinate [label=above: {$\scriptstyle{#4}$}] (d) at (0.2,0.4){};
\coordinate [label=above: {$\scriptstyle{#5}$}] (e) at (0.65, -0.1){};
\coordinate [label=above: {$\scriptstyle{#6}$}] (f) at (1.1,0.4){};
\draw (a)--(b)--(c)--(a);
\end{tikzpicture}
}}

\newcommand{\trekantll}{
  \begin{tikzpicture}[scale=.5, vertices/.style={draw, fill, circle, inner sep=1.5pt}]

\draw [fill=blue, opacity=0.5] (-0.5,0)--(0,0.7)--(0.5,0)--(-0.5,0);

\node [vertices] (a) at (-0.5,0) {};
\node [vertices] (b) at (0,0.7) {};
\node [vertices] (c) at (0.5,0) {};
\node [vertices] (d) at (1,0.7) {};

\foreach \to/\from in {a/b, b/c, c/a, b/d, c/d}
\draw (\to)--(\from);
\end{tikzpicture}
}

\newcommand{\treell}{
  \begin{tikzpicture}[scale=.5, vertices/.style={draw, fill, circle, inner sep=1.5pt}]


\node [vertices] (a) at (-0.5,0) {};
\node [vertices] (b) at (0,0.7) {};
\node [vertices] (c) at (0.5,0) {};
\node [vertices] (d) at (1,0.7) {};

\foreach \to/\from in {a/b, b/c, c/a, b/d, c/d}
\draw (\to)--(\from);
\end{tikzpicture}
}

\newcommand{\treeltl}{
  \begin{tikzpicture}[scale=.5, vertices/.style={draw, fill, circle, inner sep=1.5pt}]


\node [vertices] (a) at (-0.5,0) {};
\node [vertices] (b) at (0,0.7) {};
\node [vertices] (c) at (0.5,0) {};
\node [vertices] (d) at (1,0.7) {};

\foreach \to/\from in {a/b, a/d, c/a, b/d, c/d}
\draw (\to)--(\from);
\end{tikzpicture}
}

\newcommand{\treel}{
  \begin{tikzpicture}[scale=.5, vertices/.style={draw, fill, circle, inner sep=1.5pt}]


\node [vertices] (a) at (-0.5,0) {};
\node [vertices] (b) at (0,0.7) {};
\node [vertices] (c) at (0.5,0) {};
\node (d) at (-1.5,0) {};

\foreach \to/\from in {a/b, b/c, c/a, a/d}
\draw (\to)--(\from);
\end{tikzpicture}
}

\newcommand{\edgell}{
  \begin{tikzpicture}[scale=.5, vertices/.style={draw, fill, circle, inner sep=1.5pt}]


\node [vertices] (a) at (0,0) {};
\node [vertices] (b) at (0,1) {};
\node (c) at (-0.8,0.1) {};
\node (d) at (-0.8,0.9) {};

\foreach \to/\from in {a/b, b/d, a/c}
\draw (\to)--(\from);
\end{tikzpicture}
}

\newcommand{\veel}{
  \begin{tikzpicture}[scale=.5, vertices/.style={draw, fill, circle, inner sep=1.5pt}]


\node [vertices] (a) at (-0.5,0) {};
\node [vertices] (b) at (0,0.7) {};
\node [vertices] (c) at (0.5,0) {};
\node (d) at (-0.3,0.9) {};

\foreach \to/\from in {a/b, a/c}
\draw (\to)--(\from);
\end{tikzpicture}
}

\newcommand{\bbox}{
  \begin{tikzpicture}[scale=.5, vertices/.style={draw, fill, circle, inner sep=1.5pt}]

\draw [fill=blue, opacity=0.5] (-0.5,0)--(0.5,0)--(0,0.5)--(-0.5,0);
\draw [fill=blue, opacity=0.5] (-0.5,1)--(0.5,1)--(0,0.5)--(-0.5,1);

\node [vertices] (a) at (-0.5,0) {};
\node [vertices] (b) at (0,0.5) {};
\node [vertices] (c) at (0.5,0) {};
\node [vertices] (d) at (-0.5,1) {};
\node [vertices] (e) at (0.5,1) {};

\foreach \to/\from in {a/c, c/e, b/a, b/e, e/d, d/a, b/c, c/a, c/d, c/d}
\draw (\to)--(\from);
\end{tikzpicture}
}

\newcommand{\bboxt}{
  \begin{tikzpicture}[scale=.5, vertices/.style={draw, fill, circle, inner sep=1.5pt}]

\draw [fill=blue, opacity=0.5] (-0.5,0)--(0,0.5)--(-0.5,1)--(-0.5,0);
\draw [fill=blue, opacity=0.5] (0.5,1)--(0,0.5)--(0.5,0)--(0.5,1);

\node [vertices] (a) at (-0.5,0) {};
\node [vertices] (b) at (0,0.5) {};
\node [vertices] (c) at (0.5,0) {};
\node [vertices] (d) at (-0.5,1) {};
\node [vertices] (e) at (0.5,1) {};

\foreach \to/\from in {a/c, c/e, b/a, b/e, e/d, d/a, b/c, c/a, c/d, c/d}
\draw (\to)--(\from);
\end{tikzpicture}
}


\section{Introduction}
\label{sec:intro}

We give thirty-two bialgebras of hypergraphs, or eight quartets of
bialgebras. All bialgebras have commutative multiplication.
Five of these quartets are genuinely distinct. Three of these five are
non-cocommutative, and the co-opposite of these give the last three
quartets.

Several pairs of these quartets come as four double bialgebras.
This latter notion was recently introduced and their general theory developed
by L.~Foissy \cite{Fo22}. A double bialgebra is the same as two
cointeracting bialgebras, where the underlying algebras are the same.

Foissy shows \cite{Fo22} that such a double bialgebra, with the comodule
bialgebra a Hopf algebra, comes with
a unique morphism to the double bialgebra structure naturally defined on the polynomial
ring $\QQ[x]$. In our case, i.e., for hypergraphs, we get associated
to any hypergraph
a {\it quartet} of polynomials in $\QQ[x]$. One of these
is the chromatic polynomial of the hypergraph  \cite{Doh,Hel},
which generalizes the classical chromatic polynomial of a graph.

Graph polynomials is a large subject. To a graph one may
associate many polynomials \cite{Fa, FoChrom}. However the chromatic
polynomial takes a distinctive position. It is certainly the most studied
\cite{Sa}. It is also canonical, strikingly established
by Foissy in \cite{FoChrom}, where he shows there is a {\it unique} double
bialgebra homomorphism from the double bialgebra of graphs to the double
bialgebra on $\QQ[x]$, and the image of a graph is its chromatic polynomial.

We generalize this double bialgebra to hypergraphs, but in this setting
there naturally appear three other double bialgebras, each with a unique
double bialgebra morphism to $\QQ[x]$, so that we get four canonical
polynomials associated to a hypergraph.

To give a taste of how this refines the study of hypergraphs and graphs,
recall that the chromatic polynomial of graphs does not distinguish trees
with the
same number of vertices. The four chromatic polynomials of hypergraphs will
however typically be quite distinct on trees with the same number of vertices.

\medskip
For a finite set $V$ let $P(V)$ be its power set, i.e., the set of all subsets
of $V$. A hypergraph is simply a map of sets 
$$
	h : E \pil P(V),
$$ 
where the elements of $E$ are called edges, and those of $V$ vertices. Such a hypergraph
induces three other hypergraphs, thus giving a quartet of hypergraphs:

\begin{itemize}
\item A {\it{dual hypergraph}} 
$$
	h^d : V \pil P(E),
$$ 
by considering for each vertex $v \in V$ all the edges containing $v$.
\item A {\it{complement hypergraph}} 
$$
	h^c : E \pil P(V),
$$ 
where $h^c(e) =  V \backslash h(e)$.
\item A {\it{dual complement hypergraph}} 
$$
	h^{cd} : V \pil P(E),
$$ 
by taking the dual of $h^c$ or equivalently the complement of $h^d$.
\end{itemize}

Let $H$ be the vector space with basis all isomorphism classes of finite
hypergraphs.
We first consider the restriction bialgebra
$(H, \mu,\Delta, \eta, \epsilon_\Delta)$ on hypergraphs, which 
straightforwardly generalizes the restriction bialgebra on graphs
introduced by W.~Schmitt \cite{Schmitt}.
Let $H^\circ \sus H$ be the subspace generated by the hypergraphs 
with no empty edges. It becomes a sub-bialgebra of the above. 
We then exhibit an extraction-contraction bialgebra 
$(H^\circ, \mu,\delta, \ben_\delta)$ such that $(H,\mu,\Delta)$ is a comodule
bialgebra over $(H^\circ, \mu, \delta)$. So
$(H^\circ, \mu,\Delta,\delta)$
becomes a double bialgebra in the sense of Foissy \cite{Fo22}.

Dualization and complementation are involutions on $H$. This transports
the double bialgebra $H^\circ$ to three other double bialgebras, resulting in
a quartet of double bialgebras:
\[ 
	H^\circ, \quad H^d, \quad H^c, \quad H^{cd}. 
\]

Each of them comes with a unique {\it double bialgebra} morphism to $\QQ[x]$.
We may extend 
each of these morphisms to a (single) bialgebra morphism from the restriction
bialgebra $(H, \mu, \Delta)$ to $\QQ[x]$. So for each
hypergraph $h$ we get four corresponding polynomials
\[ 
	\chi_h(x), \quad \chi_h^d(x), \quad \chi_h^c(x), \quad \chi_h^{cd}(x). 
\]

The first polynomial, $\chi_h(x)$, is the chromatic polynomial of
the hypergraph $h$.
The value $\chi_h(k)$  counts the number of colorings of the vertices of
$h$ with $k$ colors, such that {\it no edge is monochromatic}.

\begin{example}
Consider the hypergraph $h$ with one edge $E=\{e\}$ and $|V|=n$ vertices, the
edge containing all $n$ vertices, i.e., $h(e)=V$. The hypergraph chromatic
polynomial is then
$\chi_h(x) = x^n - x$.
We also note that the chromatic polynomial of a hypergraph $h$  
vanishes whenever $h$ has an edge with exactly one vertex.
\end{example}

The notion of chromatic polynomial for hypergraphs, seems to date back to
the article \cite{Hel}. It has further been considered in various
articles, like \cite{Doh,Tom, ZD}.
The other polynomials above also have interpretations in terms of colorings,
but have to our knowledge, at least in a systematic setting,
not been considered before.

In the last section we give three more quartets of bialgebras.
The first comes from the bialgebras of graphs and hypergraphs
introduced in \cite{AA} Subsections 3.1 and 20.1.
The coproduct $\Delta^\prime$
is now no longer co-commutative. It is (for the first bialgebra)
restriction of edges on the first factor,
but descent of edges on the second factor. 
The second quartet comes from a coproduct $\Delta^{\prime \prime}$
which is descent of edges on both factors. 
So this is again a co-commutative coproduct. 
The third quartet comes from an extraction-contraction bialgebra
introduced recently by L.Foissy \cite{Fo-Hyp}. It cointeracts with
the above descent-descent bialgebra.

\medskip Summing up we have in all thirty-two bialgebras for hypergraphs:

\begin{itemize}
\item[$\bullet \, 4$:] A quartet of restriction-restriction bialgebras,
 (the base case is classical), Section \ref{sec:hyp},
\item[$\bullet \, 8$:] A quartet of cointeracting extraction-contraction bialgebras
  and its co-opposite, (the coproduct is not co-commutative),
  Section \ref{sec:ext-con}
\item[$\bullet \, 8$:] A quartet of restriction-descent bialgebras
  and its co-opposite (base case M. Aguiar and F. Ardila \cite{AA}),
  Subsection \ref{subsec:res-descent},
\item[$\bullet \, 4$:] A quartet of descent-descent bialgebras,
  Subsection \ref{subsec:des-des},
\item[$\bullet \, 8$:] A quartet of cointeracting extraction-contraction
  bialgebras and its co-opposite (base case L.~Foissy \cite{Fo-Hyp}),
  Subsection \ref{subsec:res-contract2}
\end{itemize}

The last two quartets give four double bialgebras, and so four new
associated polynomials. For the base case this is essentially
the classical chromatic polynomial of a graph, \cite{Fo-Hyp}. It is the
chromatic polynomial of the graph obtained by replacing each hyperedge
with a complete graph on its set of vertices. Nevertheless, the quartet
gives four polynomials instead of the single classical one. For
a hypergraph, and the graph obtained by the replacement above,
the three polynomials not from the base case are usually quite distinct.

\medskip

The organization of the article is as follows.
Section \ref{sec:hyp} recalls basics around hypergraphs, and
the notions of dualization $d$ and complementation $c$. We give the restriction
bialgebra of hypergraphs,
and the three other bialgebras derived using $d$, $c$ and $cd$. Section
\ref{sec:coint} recalls the notion of cointeracting bialgebras, and
double bialgebras from \cite{Fo22}.
Section \ref{sec:ext-con}
introduces the extraction-contraction bialgebra on hypergraphs.
We show that the restriction bialgebra of Section \ref{sec:hyp}
is a comodule bialgebra over this bialgebra.
Section \ref{sec:poledge}
considers the maps from the double bialgebras of hypergraphs
to $\QQ[x]$ and computes the associated polynomial for the simplest
of hypergraphs: i.~the hypergraphs with no edges, ii.~the discrete hypergraphs,
where each edge is paired with exactly one vertex, and iii.~the hypergraphs with
only one edge containing all vertices.
Section \ref{sec:chrompol}
shows that the associated polynomial of hypergraphs is the chromatic
polynomial, counting non-monochromatic colorings.
Section \ref{sec:quartpol}
gives examples of the quartet of polynomials for various
hypergraphs. In Section \ref{sec:newq}
we introduce first another quartet of bialgebras
of hypergraphs, derived from the bialgebras of graphs and hypergraphs
introduced in \cite{AA}.
Moreover we ask if there are cointeracting bialgebras associated to these
bialgebras. Secondly we give two more bialgebras, establishing in 
total fourteen bialgebras on the vector space $H$ of isomorphism classes of
finite hypergraphs.

\medskip
\noindent {\it Acknowledgements:} The second author thanks NTNU for
hosting a longer stay where the initial phases of this work was done.
He received support from Lorentz Meltzers h{\o}yskolefond.

We thank L.~Foissy for bringing to our attention some inaccuracies
in the first version of this article, concerning the cointeractions of
bialgebras


\section{Bialgebras of hypergraphs}
\label{sec:hyp}

We give our notion of hypergraphs, and the hypergraphs one may derive
using the process of dualization and complementation.
We give the restriction coalgebra on hypergraphs, and the three
other coalgebras one gets by transporting this using dualization and
complementation. We list examples of the coproducts.
There are also two commutative products. In total we get four bialgebras
of hypergraphs.


\subsection{Hypergraphs, relations, and bipartite graphs}

 For a set $S$ denote by $P(S)$ its
 power set, consisting of all subsets of $S$. The complement
 of a subset $T \sus S$ is written $S \backslash T$, but when
 $S$ is understood, we simply write $T^c$. The subset $T$ may be
 identified with a map $S \pil \{0,1\}$ by sending the elements of $T$
 to  $1$ and other elements to $0$. Thus 
 \[ 
 	P(S) = \Hom(S, \{0,1\}).
\]

\begin{definition}
Let $V$ and $E$ be sets. A {\it hypergraph} is a map $h : E \pil P(V)$.
The elements of $V$ and $E$ are {\it vertices} and {\it edges}, respectively.
\end{definition}

For an edge $e \in E$, the subset $h(e) \sus V $ is the set of vertices 
of $e$. By abuse of notation we may write $e \sus V$.
We have {\it no restrictions} on the map $h$.
We allow edges to have an empty set of vertices.
We allow different edges to have the same set of vertices, or more
generally that their vertex sets may be related by inclusion.
A hypergraph is thus an element in
\begin{equation} 
\label{eq:hyp-EV} 
	\Hom(E, \Hom(V, \{0,1\})) 
	= \Hom(E \times V, \{0,1\}) 
	= \Hom(V, \Hom(E, \{0,1\})). 
\end{equation}
By the middle part above, our general notion of a hypergraph is
simply equivalent to a subset of $E \times V$, or a relation between $E$
and $V$. We could thus equally well (and maybe more appropriately) have
called this a relation. However the connotations suggested by
vertices and (hyper)edges will be natural, so we use this.

\begin{remark}
A relation between $E$ and $V$ also identifies as a
bipartite graph with vertices $E \cup V$.
So there is the ``paradox'':
\[ 
	\text{bipartite graphs } \subset \text{ graphs }
  	\subset \text{ hypergraphs } = \text{ bipartite graphs}.
\]
\end{remark}

\begin{remark}
  A map $E \pil \Hom(V,\{0,1\})$ can be considered a {\it promap}
  or {\it profunctor} $E \promap V$. This is a special case of a profunctor
  between partially ordered sets \cite{Fl}, or even of profunctors
  between categories \cite[Ch.4]{ACT}.
\end{remark}




\subsection{Derived hypergraphs}

By \eqref{eq:hyp-EV} above, we get a dual hypergraph
\[ 
	h^d : V \pil P(E), 
\]
where $V$ becomes the edges of the dual hypergraph, and $E$ the vertices.
There is a complementation map
\[ 
	P(V) \mto{c} P(V), \quad S \mapsto S^c = V\backslash S 
\]
which is an involution. Composing with $h$ we get a complement hypergraph
\[ 
	h^c : E \mto{h} P(V) \mto{c} P(V). 
\]

We may also take the dual of the complement $h^{cd} = (h^c)^d$ and the
complement of the dual $h^{dc} = (h^d)^c$. These are equal (see
Example \ref{ex:hyp-EV} below). The hypergraph $h$ induces four hypergraphs:
\[ 
	h, \quad h^d, \quad h^c, \quad h^{cd}. 
\]
And by \eqref{eq:hyp-EV} the hypergraph $h : E \pil P(V)$ is  
equivalent to a relation between $E$ and $V$, and so may be represented by
a $0,1$-matrix with rows indexed by $E$ and columns indexed by $V$.

\begin{example} \label{ex:hyp-EV}
  Let the hypergraph $h$ be given by the $0,1$-matrix below.
  The three other hypergraphs are then displayed in the same way.
  \begin{equation*}
h =   E \,  \overset{\large{V}}{\left [ \begin{matrix} 0 & 1 & 0 & 1 \\
        1 & 0 & 1 & 1 \end{matrix} \right ]}, \quad
 h^c = E \, \overset{V}{\left [ \begin{matrix} 1 & 0 & 1 & 0 \\
      0 & 1 & 0 & 0 \end{matrix} \right ]}, \quad
h^d =  V \, \overset{E}{\left [ \begin{matrix} 0 & 1 \\  1 & 0  \\
      0 & 1 \\  1 & 1 \end{matrix} \right ]}, \quad
 h^{cd} = V \, \overset{E}{\left [ \begin{matrix} 1 & 0 \\ 0 & 1 \\
      1 & 0 \\  0 & 0 \end{matrix} \right ]} 
\end{equation*}
\end{example}


\subsection{The coalgebra}

For $U \sus V$, the {\it restriction} of edges to $U$ are the edges:
\[ 
	E_{|U} = \{ e \in E \, | \, e \sus U \}.  
\]

Let $\kk$ be a field.
Let $\Hy$ be the $\kk$-vector space generated by isomorphism
classes of hypergraphs $(E,V,h)$ where $V$ and $E$ are finite sets.
For $U \sus V$, the complement set is $U^c = V \backslash U$.
We have a coproduct $\Delta$ defined as follows (we omit
the maps $h$ as they are understood):
\[  
	(E,V) \overset{\Delta}{\longmapsto}
  \sum_{U \sus V} (E_{|U}, U) \te (E_{|U^c}, U^c). 
\]
There is a counit $\epsilon_\Delta$ on $\Hy$:
\[ 
	(E,V) \overset{}{\mapsto} \begin{cases} 1, & V = \emptyset \\
    0, & \text{ otherwise } \end{cases}. 
\]
This gives a coalgebra structure on $\Hy$, where we write $\een$
for the hypergraph $(\emptyset, \emptyset)$. 

\begin{example}
\begin{equation*}
  \cherry  \overset{\Delta} \mapsto
 \een\te \cherry  + 2 \, \point \te \linvert + \point \te \dpoint           
  + 2 \, \linvert \te \point + \dpoint \te \point
             + \cherry \te \ben
\end{equation*}
\end{example}
More examples are gathered in  Subsection \ref{subsec:hyp-ex}.


\subsection{The coalgebra from dualization}

Due to the symmetric situation of edges and vertices in a relation
we also have a dual coproduct.
Since $d$ is an involution, we get a coproduct
\[
	\Delta^d = (d \te d) \circ \Delta \circ d 
\]
on hypergraphs. Explicitly, for each $F \sus E$, let the
{\it restriction}
\[ 
	V_{|F} = \{ v \in V \, | \, \text{ the edges incident to  } v \text{ are in } F \}. 
  \]
Also let $F^c = E \backslash F$ be the complement.
The coproduct $\Delta^d$ is given by:
\[  
	(E,V) \mapsto \sum_{F \sus E} (F, V_{|F}) \te (F^c, V_{|F^c}), 
\]
and a counit $\epsilon^d_\Delta$: 
\[ 
	(E,V) \overset{}{\mapsto} \begin{cases} 1, & E = \emptyset \\
    0, & \text{ otherwise } \end{cases}. 
\]
This gives a second coalgebra structure on $\Hy$.


\subsection{The coalgebra from complementation}

The involution $c$ gives a coproduct
\[ 
	\Delta^c = (c \te c) \circ \Delta \circ c. 
\]
For a subset $U \sus V$, let the {\it link}
\[ 
	\lk_U E = \{ e \in E \, | \, h(e) \supseteq U \}
  = \{ e \in E \, | \, e \text{ is incident to every } u \in U \}. 
 \]
The complement $\Delta^c$ is then defined by
\[ 
	(E,V) \mapsto \sum_{U \sus V} (\lk_{U^c} E, U) \te (\lk_U E, U^c), 
\]
and the counit $\epsilon^c_\Delta$ is the same as $\epsilon_\Delta$:
\[ 
	(E,V) \overset{}{\mapsto} \begin{cases} 1, & V = \emptyset \\
    0, & \text{ otherwise } \end{cases}. 
\]


\subsection{The coalgebra from the dual-complement}

The involution $d \circ c$ gives a coproduct
\[ 
	\Delta^{cd}  = (d \circ c \te d \circ c) \circ \Delta \circ (d \circ c). 
\]
For a subset $F \sus E$, let the {\it core}
\[ 
  \co_F V = \cap_{f \in F} h(f) = \{ v \in V \, | \, v
  \text{ is incident to every } f \in F\}. 
 \]
The dual-complement $\Delta^{cd}$ is then defined by
\[ 
	(E,V) \mapsto \sum_{F \sus E} (F, \co_{F^c} V) \te (F^c, \co_F V), 
\]
and the counit $\epsilon^{cd}_\Delta$ is the same as $\epsilon^d_\Delta$:
\[ 
	(E,V) \overset{}{\mapsto} \begin{cases} 1, & E = \emptyset \\
    0, & \text{ otherwise } \end{cases}. 
\]



\subsection{Examples}
\label{subsec:hyp-ex}

Here are the various coproducts for the path graph with three vertices.
We write $\een$ for the hypergraph $(\emptyset,\emptyset)$.
\begin{align*}
  \cherry  & \overset{\Delta} \longmapsto
 \een \te \cherry  + 2 \, \point \te \linvert + \point \te \dpoint           
  + 2 \, \linvert \te \point + \dpoint \te \point
             + \cherry \te \een  \\
  \cherry & \overset{\Delta^d} \longmapsto \een \te \cherry +
            2 \, \pointl\te \pointl + \cherry \te \een \\
  \cherry  & \overset{\Delta^c} \longmapsto
             \een \te \cherry  + 2 \, \point \,  \edge \te \pointl \point  +
             \point \te \pointl \pointl           
  + 2 \, \pointl \point  \te \point \,  \edge + \pointl \pointl \te \point
             + \cherry \te \een  \\
     \cherry & \overset{\Delta^{cd}} \longmapsto \een \te \cherry +
               2 \, \pointl \point \te \pointl \point + \cherry \te \een
\end{align*}

Here are the coproducts for the hypergraph with one edge on three vertices:
  \begin{align*}
    \trekant & \overset{\Delta}{\longmapsto} \een \te \trekant
    + 3 \, \point \te \dpoint + 3 \, \dpoint \te \point + \trekant \te \een \\
    \trekant & \overset{\Delta^d}{\longmapsto} \een \te \trekant
               +  \trekant \te \een \\
    \trekant & \overset{\Delta^c}{\longmapsto} \edge \te \trekant
               + 3 \pointl \te \linvert + 3 \, \linvert \te \pointl +
    \trekant \te \edge \\
    \trekant & \overset{\Delta^{cd}}{\longmapsto} \trev \te \trekant
    +  \trekant \te \trev
  \end{align*}
  Note that for $\Delta^c$ we get an empty edge instead of $1$ in the
  first tensor term. This is due to the complement of the triangle edge being
  an empty edge.


\subsection{Relation between the coproducts}

There is the following relation between these coproducts. 

\begin{proposition} Let the indices $\ell$ and $k$ be either $c,d,cd$ or empty.
Write $\id$ for the identity map. Then:
\[ 
	(\id \te \Delta^\ell) \circ \Delta^k = \tau_{1,3} (\Delta^\ell \te \id) \circ \Delta^k. 
\]
\end{proposition}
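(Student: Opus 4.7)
The plan is to reduce the identity to the cocommutativity of both coproducts $\Delta^k$ and $\Delta^\ell$.

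First I would verify that each of the four coproducts $\Delta^k$ (for $k \in \{\emptyset, c, d, cd\}$) is cocommutative, i.e., $\tau \circ \Delta^k = \Delta^k$ where $\tau$ is the twist on $\Hy \te \Hy$. For $\Delta$ itself this is transparent from the formula
\[
\Delta(E,V) = \sum_{U \sus V}(E_{|U},U) \te (E_{|U^c}, U^c),
\]
since the involution $U \mapsto U^c$ is a bijection on $P(V)$ that swaps the two tensor factors of each summand while leaving the overall sum unchanged. The same reindexing, with $F \leftrightarrow F^c$ on $P(E)$, handles $\Delta^d$; and $\Delta^c$ and $\Delta^{cd}$ admit identical reindexings of their defining formulas and are likewise cocommutative.

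Granting cocommutativity of $\Delta^k$ and $\Delta^\ell$, the identity becomes a short Sweedler-style manipulation. Write $\Delta^k(x) = \sum_i a_i \te b_i$; cocommutativity of $\Delta^k$ says this equals $\sum_i b_i \te a_i$ as an element of $\Hy \te \Hy$, so applying $\id \te \Delta^\ell$ to both representations of the same element gives
\[
(\id \te \Delta^\ell) \Delta^k(x) = \sum_i a_i \te \Delta^\ell(b_i) = \sum_i b_i \te \Delta^\ell(a_i).
\]
Writing $\Delta^\ell(a_i) = \sum_j a_{ij} \te a'_{ij}$, I compare with
\[
\tau_{1,3}(\Delta^\ell \te \id) \Delta^k(x) = \tau_{1,3} \sum_{i,j} a_{ij} \te a'_{ij} \te b_i = \sum_{i,j} b_i \te a'_{ij} \te a_{ij}.
\]
The two sides agree precisely when $\sum_j a_{ij} \te a'_{ij} = \sum_j a'_{ij} \te a_{ij}$ for every $i$, which is exactly cocommutativity of $\Delta^\ell$ applied to each $a_i$.

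The main conceptual point, and the only real obstacle, is recognizing that the proposition encodes a double application of cocommutativity (one swap provided by $\Delta^k$, moving the outer factor across, and another by $\Delta^\ell$, reordering the inner pair) rather than a form of coassociativity --- note that for $k \ne \ell$ there is no obvious coassociativity relation between $(\id \te \Delta^\ell) \Delta^k$ and $(\Delta^\ell \te \id) \Delta^k$ to start from. Once the cocommutativity viewpoint is clear, the remaining work is merely the direct verification of cocommutativity for each of the four coproducts by reindexing in their explicit formulas.
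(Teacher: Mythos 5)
Your proposal is correct and follows essentially the same route as the paper: the paper's proof is also a two-line Sweedler computation deriving the identity formally from the cocommutativity of $\Delta^k$ (to swap the outer factor) and of $\Delta^\ell$ (to reorder the inner pair). The only difference is that you spell out the verification of cocommutativity for each of the four coproducts via the reindexings $U \mapsto U^c$ and $F \mapsto F^c$, which the paper leaves implicit.
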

Note that $\tau_{1,3}(a \otimes b \otimes c):=c \otimes b \otimes a$.

\begin{proof}
This is a formal consequence of $\Delta^\ell$ and $\Delta^k$ both being cocommutative. Using Sweedler notation
\begin{equation} \label{eq:rel-D1}
    h \overset{\Delta^k} \longmapsto h_{(1)} \te h_{(2)} = h_{(2)} \te h_{(1)}.
\end{equation}
Then $\id \te \Delta^\ell$ maps the $\Delta^k(h)=h_{(1)} \te h_{(2)}$ to (using cocommutativity)
\[ 
  	h_{(1)} \te h_{(21)} \te h_{(22)} = h_{(1)} \te h_{(22)} \te h_{(21)}.
\]
  Switching the first and third term, this is
\begin{equation} \label{eq:rel-D12}  
	h_{(21)} \te h_{(22)} \te h_{(1)}.
\end{equation}
On the other hand applying $(\Delta^\ell \te \id)$ to the right side of \eqref{eq:rel-D1}, i.e., $\Delta^k(h)=h_{(2)} \te h_{(1)}$, we get precisely \eqref{eq:rel-D12}.
\end{proof}


\subsection{Products and bialgebras}
\label{subsec:hyp-products}
We have a product $\mu$ on $\Hy$ by:
\[ 
	(E,V,h) \cdot (E^\prime, V^\prime,h^\prime)
  	= (E \sqcup E^\prime, V \sqcup V^\prime,h \sqcup h^\prime),
\]
and the unit being $1 = (\emptyset, \emptyset)$.
Write $\eta : \kk \pil H$ sending $1 \mapsto 1 = (\emptyset, \emptyset)$.
With this product, we have two bialgebra structures on $\Hy$:
\[ 
	(\Hy, \mu, \Delta, \eta, \epsilon_\Delta), \quad
        (\Hy, \mu, \Delta^d, \eta, \epsilon^d_\Delta). 
\]

There is also another product $\mu^c$ on $\Hy$ given by
$\mu^c = c \circ \mu \circ (c \te c)$: 
\[ 
	(E,V,h) \cdot (E^\prime, V^\prime,h^\prime)
  = (E \sqcup E^\prime, V \sqcup V^\prime,h \hat{\sqcup} h^\prime),
\]
where $h \hat{\sqcup} h^\prime$ sends $e \in E$ and $e^\prime \in E^\prime$ to respectively:
\[ 
	e \mapsto h(e) \cup V^\prime, \quad e^\prime \mapsto V \cup h^\prime(e^\prime). 
\]
The unit is again $\een = (\emptyset, \emptyset)$. With this product
we have another two bialgebra structures on $\Hy$, which we denote by
\[ 
	(\Hy, \mu^c, \Delta^c, \eta, \epsilon^c_\Delta), \quad
        (\Hy, \mu^c, \Delta^{cd}, \eta, \epsilon^{cd}_\Delta). 
\]

The bialgebras $(\Hy,\mu,\Delta) $ and $(\Hy, \mu^c, \Delta^c)$
are graded by cardinalities
of vertices.
On the other hand, the bialgebras $(\Hy, \mu, \Delta^d)$ and
$(\Hy, \mu^c, \Delta^{cd})$ are graded by cardinalities of edges.


\section{Cointeracting bialgebras}
\label{sec:coint}

We recall the notion of cointeracting bialgebras.
For more detail and examples, see the concise review of D.~Manchon \cite{Man}.
Let
\[ (A,\mu_A, \Delta_A, \eta_A, \epsilon_A), \quad
  (B,\mu_B, \delta_B, \eta_B, \epsilon_B) \]
be bialgebras over the field $\kk$. We suppose $A$ is a (left) comodule over $B$:
\[ \delta : A \pil B \te A, \] and want this to fulfill the following
conditions ($\id$ denotes identity maps):

\begin{itemize}
\item[1)] The counit $\eps_A : A \pil \kk$ is a comodule morphism:
  \begin{equation} \label{eq:dobi-counit}
    (\id_B \te \epsilon_A) \circ \delta = \eta_B \circ \epsilon_A.
    \end{equation}
  \item[2)] The coalgebra map $\Delta_A : A \pil A \te A$ is a
    comodule morphism:
    \begin{equation} \label{eq:dobi-comod}
(\id_B \te \Delta_A) \circ \delta = m_{13,2,4} \circ (\delta \te \delta)
\circ \Delta_A,
\end{equation}
where $m_{13,2,4}$ is multiplication in $B$ for the $1$'st and $3$'rd factor.
\item[3)] The unit $\eta : \kk \pil A$ is a comodule morphism, which amounts
  to $\delta(1_A) = 1_B \te 1_A$.
\item[4)] The multiplication $\mu_A : A \te A \pil A$ is a comodule morphism:
  \[ \delta \circ \mu_A = (\id_B \te \mu_A) \circ m_{13,2,4} \circ
    (\delta \te \delta). \]
\end{itemize}

\begin{remark}
The characters  of $A$, the "dual object" of $A$,
is then a monoid $M(A)$, whose
multiplication is the dual of the coproduct $\Delta_A$.
(If $A_\Delta$ is a Hopf algebra, then $M(A)$ is a group.)

The characters of $B$ also get a monoid structure $E(B)$ by the dual of
the coproduct $\delta_B$. This monoid acts as endomorphisms on
the monoid of characters $M(A)$ by the duals of $\delta$ and 
\eqref{eq:dobi-comod}.
\end{remark}

When $A = B$ and:
\begin{itemize}
  \item $\mu_A = \mu_B$, so they are the same  as algebras, and
  \item  $\delta = \delta_B$,
\end{itemize}
following
L.~Foissy \cite{Fo22}, we call $(B,\mu, \Delta_B, \delta_B)$ with its two
counits $\eps_\Delta$ and $\vep_\delta$ a {\it double bialgebra}. 
Then 3) and 4) above, follow from $B$ being a bialgebra. 

\begin{remark}
  By \cite[Cor.2.4]{Fo22}, for a double bialgebra,
  if $(B, \mu, \Delta)$ is a Hopf algebra, the
multiplication $\mu$ is commutative.
\end{remark}

\begin{remark}
  The first example of a double bialgebra seems to be
  \cite{CEM}, where $B_\Delta$ is the Connes--Kreimer Hopf algebra,
  \cite{CK}. The coproduct $\Delta$ is given by admissible
  cuts in non-planar rooted trees, and $\delta$ is given by partitioning trees
  into subtrees and contracting these ({\it{extraction-contraction}}). 
  Quasi-shuffle double bialgebras occur in \cite{EM}, and
  in a slightly more general setting in \cite[Sec.1.3]{Fo22}.
  \cite{FFM} has double bialgebras for finite topologies
  (which may be identified with finite preorders).
  For more details and examples see \cite{Man}.
\end{remark}

In the following 
our focus is double bialgebras,
but we would like to make slightly
more general room. We suppose $B \sus A$ as vector spaces, and:

\begin{itemize}
  \item The multiplication
    restricts $m_A|_B = m_B$.
  \item The comodule morphism $\delta$ restricts to the coproduct $\delta_B$.
  \end{itemize}

  In our case $A$ is the vector space $H$ spanned by (isomorphism classes) of
  finite hypergraphs. For $B$ we restrict to various
  subclasses of hypergraphs.
  With each of these restrictions $(B,\mu_B, \Delta_B)$ becomes a
  {\it connected}
  sub-bialgebra of $(A, \mu_A, \Delta_A)$ and so
  a Hopf algebra. Furthermore $(B,\mu_B, \Delta_B, \delta_B)$ is a double
  bialgebra.
  In each case there will also be a sub-bialgebra $C$ of $A$
  such that
  $A \iso B \te C$.


\section{Extraction-contraction bialgebras}
\label{sec:ext-con}

We introduce the extraction-contraction bialgebra for hypergraphs.
This generalizes the extraction-contraction bialgebra for graphs.
However the algebra for graphs seems to be done only for simple
graphs in the literature. In the coproduct one then sums over
certain partitions of the vertex sets. But to get this for hypergraphs
we must rather sum over (nearly) all subsets of edges. This
point of view occurs in \cite{DFM} considering Tutte polynomials for
minor systems, in particular for graphs \cite[Section 4.2]{DFM},
or in \cite{KMT} considering Tutte polynomials for species, which
connects to bialgebras via the Fock functor, \cite[Chap.15]{AM}. 
See also \cite{Man12}.

We present an extraction-contraction bialgebra in cointeraction
with the restriction bialgebra on hypergraphs.
Via dualization and complementation we transport this to four double
bialgebras on hypergraphs.



\subsection{Connectedness and contractions} 
\label{subsec:EC-quot}

For each vertex set $W$, there is the distinguished {\it discrete} hypergraph,
whose edges $E = W$ , i.e., are the sets $\{w\}$ for $w \in W$. 
The hypergraph $(W,W)$ is given by the canonical map $W \pil P(W)$.

Given a map $\phi : V \pil W$ we get a map $P(V) \mto{P\phi} P(W)$
sending $S \mapsto \phi(S)$.
Assume now $\phi$ is surjective and there is a map $\psi : E \pil W$
giving a commutative diagram:
\begin{equation} \label{eq:EC-EVmor}
  \xymatrix{ E \ar[r]^{h} \ar[d]_{\psi} & P(V) \ar[d]^{P\phi} \\
    W \ar[r] & P(W)}.
  \end{equation}
Note that in such a diagram, the map $\psi$ is uniquely determined
by $\phi$. 

\begin{definition} For a hypergraph $(E,V)$ let $E^*$ be the non-empty
  edges in $E$. 
  The hypergraph $(E,V)$ has {\it connected vertex set}
  if the only such diagram for $(E^*,V)$ 
with $V \ppil W$ surjective is when  $W$ is a single point.
This means the equivalence relation on $V$ generated by $u \sim u^\prime$
if $u$ and $u^\prime$ are on a common edge, only has one equivalence class,
the whole of $V$. 
\end{definition}

That $\phi : V \ppil W$ gives a diagram \eqref{eq:EC-EVmor}
for $(E^*,V)$ with
a discrete hypergraph $(W,W)$, means that the vertex set of each
edge $e \in E^*$ maps
to a single point in $W$ (depending on $e$). 
Let $(E^*_j,V_j), j \in J$ be the connected components of $(E^*,V)$ for the
equivalence relation above. Let $V^\prime = \cup_{j \in J} V_j$ and
$W = J \cup (V \backslash V^\prime)$. (Note that $V \backslash V^\prime$
are the vertices which are not incident to any edge.)
This gives a surjection $V \ppil W$, and
it gives a commutative diagram \eqref{eq:EC-EVmor}
for $(E^*,V)$ and the discrete hypergraph
$(W,W)$, which is initial among such diagrams. 

\medskip

For each subset $F \sus E^*$, we get a sub-hypergraph $(F,V)$
with connected components $(F_i,U_i), i \in I$. Let  $U = \cup_{i \in I} U_i$,
the {\it vertex support} of $(F,V)$,
and let $V/F = I \cup (V\backslash U)$ be the {\it contraction} of
$V$ by $F$. 
This gives a surjection $V \ppil V/F$, and a commutative diagram
with \eqref{eq:EC-EVmor} for $(F,V)$ and 
$(V/F,V/F)$, which is initial among
all commutative diagrams to discrete hypergraphs.


\subsection{Coproduct $\delta$} 
\label{subsec:EC-d1}
Let $H^\circ \sus H$ be generated by the hypergraphs where each edge
has a non-empty vertex set. That is hypergraphs $(E,V)$ with $E = E^*$,
or equivalently $E_{|\emptyset} = \emptyset$.
Note that $H^\circ \sus H$ is actually
a sub-bialgebra of $(H,\mu,\Delta)$ as the coproduct $\Delta$ restricts to
$H^\circ$.

We can now make $H$ a comodule over $H^\circ$ by the coproduct:
\begin{equation} \label{eq:extcon-delta}
  \delta : H \pil H^\circ \te H, \quad
  (E,V) \overset{\delta}{\mapsto} \sum_{F \sus E^*} (F,V) \te (F^c,V/F).
  \end{equation}
The pair $(F^c,V/F)$ is a hypergraph by the composition
\[ F^c \sus E \pil P(V) \pil P(V/F). \]
Note that all edges with empty vertex sets go into $F^c$ in the right
tensor term, above.

\begin{example}
 \begin{equation*}
    \cherry \overset{\delta}{\mapsto} \trev \te \cherry  +
    2 \, \linP \te \linvert + \cherry \te \bullet
  \end{equation*}

The next example is a single edge with three vertices:
  \begin{equation*}
    \trekant \overset{\delta}{\mapsto} \trev \te \trekant  
     + \trekant \te \bullet
    \end{equation*}

Now we consider two edges, one with two vertices, and one with three vertices:
 \begin{equation*}
    \trekantKant \overset{\delta}{\mapsto} \trev \te \trekantKant
     + \linP \te \linvert + \trekant \te \pointl + \trekantKant \te \bullet
    \end{equation*}
    Note the vertex with a single edge attached. Often in the literature
    an edge with single vertex 
    is displayed as a loop, but here we do as above.
\end{example}
    

The comodule coproduct $\delta$ restricts to a coproduct
$\delta^\circ : H^\circ \pil H^\circ \te H^\circ$. 
There is a counit $\vep_\delta$:
\[ (E,V) \overset{}{\mapsto} \begin{cases} 1, & E = \emptyset \\
    0, & \text{ otherwise } \end{cases}. 
\]
which is the same as the map
$\epsilon^d_\Delta$ restricted to $H^\circ$.

\begin{proposition}
  $(H^\circ,\mu, \delta^\circ, \eta, \vep_\delta)$ is a bialgebra.
\end{proposition}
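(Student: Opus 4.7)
The plan is to verify the four bialgebra axioms for $(H^\circ, \mu, \delta^\circ, \eta, \vep_\delta)$: coassociativity of $\delta^\circ$, the counit property of $\vep_\delta$, multiplicativity of $\delta^\circ$ (so that it descends to an algebra map $H^\circ \to H^\circ \te H^\circ$), and compatibility of $\vep_\delta$ with $\mu$ and $\eta$. The first axiom is where the geometric content of the construction lies; the remaining three are essentially bookkeeping on disjoint unions.

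For coassociativity, applied to $(E,V) \in H^\circ$, expanding $(\delta^\circ \te \id) \circ \delta^\circ$ yields a sum indexed by pairs $G \sus F \sus E$ of terms $(G,V) \te (F \setminus G,\, V/G) \te (F^c,\, V/F)$, while $(\id \te \delta^\circ) \circ \delta^\circ$ yields a sum indexed by $G \sus E$ and $H \sus G^c$ of terms $(G,V) \te (H,\, V/G) \te (G^c \setminus H,\, (V/G)/H)$. Reindexing via $F = G \sqcup H$ matches the two sums term-by-term, provided one knows that successive contractions compose, namely
\[
V/(G \sqcup H) \;=\; (V/G)/H .
\]
This is the core lemma I would prove first, and it is the one genuine obstacle. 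The cleanest argument uses the initial-object characterization of contraction from Subsection \ref{subsec:EC-quot}: both sides are initial among surjections $\phi : V \ppil W$ which fit into a diagram \eqref{eq:EC-EVmor} for the sub-hypergraph $(G \sqcup H, V)$ with $(W,W)$ discrete, i.e., which send the vertex set of each edge of $G \sqcup H$ to a single point. The only subtle point is tracking isolated vertices correctly: a vertex in $V \setminus V^\prime$ (not incident to any edge of $G \sqcup H$) must be recognized as an isolated vertex of $(V/G)/H$ as well, and similarly one index in $V/G$ created from a $G$-component carrying no $H$-edge must persist as an isolated index in $(V/G)/H$.

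The counit axiom is immediate: in $(\vep_\delta \te \id) \circ \delta^\circ (E,V)$ only the $F = \emptyset$ term survives, and since $V/\emptyset = V$ we recover $(E,V)$; symmetrically on the right via $F = E$, since $E/E$ consists of the single index produced by the connected components plus any isolated vertices, and $(\emptyset, V/E)$ is killed by $\vep_\delta$ unless empty \ldots wait: we actually get $(E,V) \te (\emptyset, V/E)$ and $\vep_\delta(\emptyset, V/E) = 1$ because the edge set is empty, so this also returns $(E,V)$.

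For multiplicativity, let $h = (E,V)$ and $h^\prime = (E^\prime, V^\prime)$. A subset of $E \sqcup E^\prime$ splits uniquely as $F_1 \sqcup F_2$ with $F_1 \sus E$, $F_2 \sus E^\prime$, and because the two components of $h \cdot h^\prime$ share no vertices, the connected components of $(F_1 \sqcup F_2, V \sqcup V^\prime)$ are the disjoint union of those of $(F_1,V)$ and $(F_2,V^\prime)$. Hence
\[
(V \sqcup V^\prime)/(F_1 \sqcup F_2) \;=\; V/F_1 \;\sqcup\; V^\prime/F_2,
\]
and collecting the corresponding terms gives $\delta^\circ(h \cdot h^\prime) = \delta^\circ(h) \cdot \delta^\circ(h^\prime)$. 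The unit compatibilities $\delta^\circ(\een) = \een \te \een$ and $\vep_\delta(\een) = 1$ are trivial, and $\vep_\delta$ is multiplicative because $E \sqcup E^\prime = \emptyset$ iff both $E$ and $E^\prime$ are empty. Once the contraction-of-contraction identity is in hand, this completes the verification.
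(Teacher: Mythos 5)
Your proposal is correct and follows essentially the same route as the paper: coassociativity is reduced to the identity $V/(G\sqcup H)=(V/G)/H$ for disjoint edge subsets (which the paper simply declares clear, while you justify it via the initial-object characterization of contraction), and the counit axiom is checked on the $F=\emptyset$ and $F=E$ terms. Your explicit verification of multiplicativity over disjoint unions is a detail the paper leaves implicit but is handled the same way.
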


\begin{proof}
  The essential thing is to prove that $\delta$ is coassociative. Letting
  the edge sets $A, B$ be disjoint subsets of $E = E^*$,
  this amounts to show that the map
  $V \pil V/(A \cup B)$ identifies as the composition
  $V \pil V/A \pil (V/A)/B$, which is clear.

  By looking at the terms of \eqref{eq:extcon-delta} when $F = \emptyset$ and
  when $F = E^* = E$ we also see:
  \[ (\vep_\delta \te \id) \circ \delta = (\id \te \vep_\delta) \circ
    \delta = \id. \]
\end{proof}


\begin{theorem} \label{thm:EC-cointer1}
  The bialgebra $(\Hy,\mu, \Delta, \eta, \eps_\Delta)$
  is a comodule bialgebra over
  the bialgebra $(H^\circ,\mu, \delta^\circ, \eta, \vep_\delta)$
  via $\delta$ in \eqref{eq:extcon-delta}.

  In consequence  $(H^\circ, \mu, \Delta^\circ, \delta^\circ)$
  is a double bialgebra.
\end{theorem}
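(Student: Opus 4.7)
The plan is to verify the four conditions of Section \ref{sec:coint} with $A = \Hy$ equipped with $(\mu, \Delta, \eta, \epsilon_\Delta)$, $B = \Hy^\circ$ equipped with $(\mu, \delta^\circ, \eta, \vep_\delta)$, and coaction $\delta : \Hy \pil \Hy^\circ \te \Hy$ from \eqref{eq:extcon-delta}. Once these hold, the ``in consequence'' clause is immediate: $\Delta$ restricts to $\Hy^\circ$ (any restriction of a hypergraph with no empty edges still has no empty edges), $\delta$ restricts to $\delta^\circ : \Hy^\circ \pil \Hy^\circ \te \Hy^\circ$ (a non-empty edge in $F^c$ has non-empty image in $V/F$), and the cointeraction on $\Hy$ thereby restricts to $\Hy^\circ$, yielding the double bialgebra.

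Three of the four conditions are routine. For the counit (1), observe that $V/F \neq \emptyset$ whenever $V \neq \emptyset$ (the quotient consists of connected components of $F$ together with $F$-isolated vertices); so for $V \neq \emptyset$ every term of $(\id \te \epsilon_\Delta) \circ \delta$ vanishes, and for $V = \emptyset$ only $F = \emptyset$ contributes, giving $(\emptyset, \emptyset) = \eta \circ \epsilon_\Delta(E, \emptyset)$. Condition (3) is immediate from $\delta(\emptyset, \emptyset) = (\emptyset, \emptyset) \te (\emptyset, \emptyset)$. For multiplicativity (4), any $F \sus (E \sqcup E^\prime)^*$ splits uniquely as $F_1 \sqcup F_2$ with $F_i$ on each side; connected components do not straddle the disjoint union, so $(V \sqcup V^\prime)/F = V/F_1 \sqcup V^\prime/F_2$, and $\delta$ factors multiplicatively.

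The main step is the comodule compatibility (2),
\[
  (\id \te \Delta) \circ \delta = m_{13,2,4} \circ (\delta \te \delta) \circ \Delta.
\]
Evaluated on $(E, V)$, the left side is indexed by pairs $(F, \bar U)$ with $F \sus E^*$ and $\bar U \sus V/F$, with summand $(F, V) \te ((F^c)_{|\bar U}, \bar U) \te ((F^c)_{|\bar U^c}, \bar U^c)$. The right side, after expanding $\Delta$, applying $\delta \te \delta$, and multiplying the first and third tensor slots, is indexed by triples $(U, F_1, F_2)$ with $U \sus V$, $F_1 \sus (E_{|U})^*$, $F_2 \sus (E_{|U^c})^*$, with summand $(F_1 \sqcup F_2, V) \te (E_{|U} \setminus F_1, U/F_1) \te (E_{|U^c} \setminus F_2, U^c/F_2)$. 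I would match the two sides via the bijection $(F, \bar U) \leftrightarrow (U, F_1, F_2)$ defined by $U = \pi^{-1}(\bar U)$ along $\pi : V \ppil V/F$, $F_i = F \cap E_{|U_i}$, with inverse $(U, F_1, F_2) \mapsto (F_1 \sqcup F_2, U/F_1)$. The main obstacle is verifying that $F = F_1 \sqcup F_2$, i.e.\ that no edge of $F$ straddles $U$ and $U^c$; this rests on the fact that $\bar U$ is itself a union of connected components of $F$ and $F$-isolated vertices, so any $h(e)$ for $e \in F$, being contained in a single $F$-component, lies entirely in $U$ or entirely in $U^c$. Granted this split, the three tensor factors match term by term: $(F, V) = (F_1, U) \cdot (F_2, U^c)$ by the multiplication rule; the edge-set equality $(F^c)_{|\bar U} = E_{|U} \setminus F_1$ follows from the equivalence $\pi(h(e)) \sus \bar U \iff h(e) \sus U$ for $e \in F^c$; and the identification $U/F_1 \iso \bar U$ matches the vertex sets and induced incidence maps, with the analogous matching on the complementary side.
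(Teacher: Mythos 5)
Your proposal is correct and follows essentially the same route as the paper: checking conditions 1)--4), with the substance in condition 2), where you set up the same bijection between the index sets of $(\id\te\Delta)\circ\delta$ and $m_{13,2,4}\circ(\delta\te\delta)\circ\Delta$ (your $\bar U\sus V/F$ is the paper's $S\cup R$ with $S$ a set of connected components and $R$ a set of $F$-isolated vertices). The key observation you isolate --- that $U=\pi^{-1}(\bar U)$ is a union of $F$-components and isolated vertices, so no edge of $F$ straddles $U$ and $U^c$ --- is exactly the point the paper's correspondence $F^1=F_{|U_S}$, $F^2=F_{|U_{S^c}}$, $F=F^1\cup F^2$ rests on.
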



\begin{proof}[Proof of Theorem \ref{thm:EC-cointer1}.]
The coproducts are:
\begin{align*}  (E,V) & \overset{\Delta}{\mapsto}
                        \sum_{V^\prime \sus V} (E_{|V^\prime }, V^\prime)
                        \te (E_{|V \backslash V^\prime }, V \backslash V^\prime), \\
  (E,V) & \overset{\delta}{\mapsto} \sum_{F \sus E^*} (F,V) \te (F^c,V/F).
\end{align*}
We must show properties 1)-4) of Section \ref{sec:coint}.
Property 3) is easy, and 4) also, as multiplication is disjoint
union of hypergraphs. 

\noindent{\bf Property 1).}
For $(E,V)$ with $V \neq \emptyset$, both sides of $\eqref{eq:dobi-counit}$
vanish. When $V = \emptyset$ we have:
\[ 
	\xymatrix{ (E,\emptyset) \ar[rr]^{\epsilon_\Delta} \ar[d]_{\delta}
    	& & 1 \ar[d]^{\eta} \\
    	(\emptyset, \emptyset) \te (E,\emptyset)
    	\ar[rr]^{\id \te \epsilon_\Delta}&  & \ben_{H^\circ} = (\emptyset, \emptyset). }
\]



\noindent{\bf Property 2).}

\noindent 1.
Let us describe the composition $(\id \te \Delta) \circ \delta$.
Using the notation at the end of the last subsection, we may write
$V/F = I \cup (V\backslash U)$, where $U$ is the support of $(F,V)$.
Now let $S \sus I$ and $R \sus V \backslash U$,
so we get decompositions $I = S \sqcup S^c$ and
$V \backslash U = R \sqcup R^c$.
Further we define $U_S = \cup_{i \in S} U_i$ and
$U_{S^c} = \cup_{i \in S^c} U_i$. 
As $S$ and $R$ varies, the composition $(\id \te \Delta) \circ \delta$
is the sum of terms
\begin{equation} \label{eq:EC-Dd} 
  (F,V) \te (F^c_{|S \cup R}, S \cup R) \te (F^c_{|S^c \cup R^c}, S^c \cup R^c).
  \end{equation}

\medskip
\noindent 2. Let us describe the composition $(\delta \te \delta) \circ \Delta$.
  For $F^1 \sus E_{|V^\prime}$, let its components be $(F^1_i,U_i), i \in S^1$
  and $U^1 = \cup_{i \in S^1} U_i$. Similarly for
  $F^2 \sus E_{|V \backslash V^\prime}$ we get $S^2$ and $U^2$.
  Write $R^1 = V^\prime \backslash U^1$ and
  $R^2 = (V \backslash V^\prime) \backslash U^2$. 
  As $F^1$ and $F^2$ varies the composition $(\delta \te \delta) \circ \Delta$
 then  sends $(E,V)$ to a sum of terms:
\begin{equation} \label{eq:EC-mdd}
  (F^1,V^\prime ) \te ((E_{|V^\prime} \backslash F^1), S^1 \cup R^1)
\te (F^2, V\backslash V^\prime) \te
((E_{|V \backslash V^\prime} \backslash F^2), S^2 \cup R^2).
\end{equation}

\medskip
\noindent 3.
The bialgebras are in cointeraction if the composite maps are equal:
\[ 
	m_{13,2,4} \circ (\delta \te \delta) \circ \Delta 
	= (\id \te \Delta) \circ \delta, 
 \]
which amounts to showing that the maps giving terms 
\eqref{eq:EC-Dd} and \eqref{eq:EC-mdd} coincide
after the multiplication of the first and third entry in the tensor product in \eqref{eq:EC-mdd}.

\medskip
\noindent 4.
The data in part 1 are $F,S, R$ and derived data $S^c, R^c, U, U_S, U_{S^c}$.
The data in part 2 are
$F^1, F^2, V^\prime$ and derived data $R^1, R^2, S^1, S^2$.
Let us see how these correspond to each other.
Given the data in part 1, we let
\[ 
	U^1 = U_S, \quad F^1 = F_{|U_S}, \quad S^1 = S, \quad R^1 = R.
\]
Note that $V^\prime = U^1 \cup R^1$. Also
\[ 
	U^2 = U_{S^c}, \quad F^2 = F_{|U_{S^c}}, \quad S^2 = S^c, \quad R^2 = R^c. 
\]
Then $V \backslash V^\prime = U^2 \cup R^2$. Note also 
\[ 
	R^1 \cup R^2 = V \backslash (U^1 \cup U^2) = V \backslash U. 
\]
Finally note that $F_1 \cup F_2 = F$ since $U_S$ and $U_{S^c}$
decompose $(F,V)$ into two components.

\medskip

\noindent 5. Conversely, given the data in part 2,
we get
\[ 
	S = S^1, S^c = S^2, R = R^1, R^c = R^2, \quad F = F^1 \cup F^2. 
\]
Hence the two maps with terms in \eqref{eq:EC-Dd} and \eqref{eq:EC-mdd}
identify (after multiplying the first and last terms in the latter).
\end{proof}

\begin{remark}
The double bialgebra for graphs was considered by L.~Foissy in reference \cite{FoChrom}, and before that by W.~Schmitt \cite{Schmitt} (but Schmitt does not state the cointeraction property). See also D.~Manchon's article \cite{Man12}. Both consider simple graphs, so one does not allow loops (edges with a single vertex) or multiple edges. Letting $G \sus \Hy$ be the subspace of graphs, i.e., hypergraphs where all edges have cardinality $\leq 2$, the simple graphs are a quotient of the subalgebra $G$. This goes well, as the coproduct $\delta$ descends, since if a graph $(E,V)$ has a loop, or a multiple edge, this will also be the case with every term in the resulting coproduct by $\delta$. The same is the case for hypergraphs. So one may consider hypergraphs with no loops (edges with only a single vertex), and with no multiple edges, and one will still get bialgebras for $\delta$ and $\Delta$.
\end{remark}

\begin{remark} L.~Foissy also considers the extraction-contraction bialgebra for
  hypergraphs in the arXiv preprint \cite{Fo-Hyp}, appearing
  some months after the present article appeared on the arXiv.
  He works in the setting of species. The associated Hopf algebra
  of hypergraphs has a coproduct which is a variation of ours. Rather
  than summing over subsets of edges as in \eqref{eq:extcon-delta}, he
  sums over surjections $V \ppil W$ such that restrictions
  to the fibers are connected hypergraphs.
  This hypergraph bialgebra of \cite{Fo-Hyp} is essentially a quotient
  of ours. Quotienting out by:
  \begin{itemize}
 \item Hypergraphs with an edge containing a single vertex
 \item The differences between a hypergraph and its reduced version by
   removing multiple edges (but keeping one edge)
  \end{itemize}
  this gives a quotient bialgebra,
  as is readily verified. In this bialgebra, for any hypergraph with a connected
  component consisting of a single
  vertex, this component must be an isolated vertex (contained in no edges).
  The bialgebra of \cite{Fo-Hyp} is obtained by modifying
  every hypergraph in the resulting bialgebra
 by:  \begin{itemize}
      \item adding an empty edge, and
      \item   replace every one vertex  isolated component $(\emptyset, v)$ with
  $(\{v\},v)$,
\end{itemize}
  \end{remark}


\subsection{Coproduct $\delta^d$}

Let $H^d \sus H$ be the subspace of hypergraphs with no isolated vertices,
i.e. every vertex is contained in at least one edge. Alternatively
$V = V^*$ or equivalently $V_{|\emptyset} = \emptyset$.
So $H^d \sus H$ is a sub-bialgebra of $(H, \mu, \Delta)$
as the coproduct $\Delta^d$ restricts to $H^d$. 

Since $d(H^\circ) = H^d$, from 
Subsection \ref{subsec:EC-d1} we get a
comodule coproduct:
\begin{equation*} \delta^d : H \pil H^d \te H, \quad 
  \delta^d =  (d \te d) \circ \delta \circ d.
\end{equation*}
We describe this explicitly. 
Consider discrete hypergraphs $(G,G)$ given by the canonical $G \pil P(G)$
and commutative diagrams from maps $\phi : E \pil G$:
\begin{equation} 
  \xymatrix{ V \ar[r]^{h^d} \ar[d]_{\psi} & P(E) \ar[d]^{P\phi} \\
    G \ar[r] & P(G)}.
 \end{equation}

\begin{definition} 
Let $V^* \sus V$ be the vertices which are incident to at least one edge. The hypergraph $(V,E)$ has {\it connected edge set} if the only such diagram for $(V^*,E)$  with $E \ppil G$ surjective is when  $G$ is a single point.
This means the equivalence relation on $E$ generated by $e \sim e^\prime$ if $e$ and $e^\prime$ share a common vertex, only has one equivalence class, the whole of $E$. 
\end{definition}

Note that if each edge is incident to a vertex and each vertex is
incident to an edge, the two notions of connectedness coincide.
But if there are vertices not on any edge, the graph may have connected edge
set but will not have connected vertex set.
Similarly if there are edges with an empty vertex set. 

Given $(E,V)$ let $(E_k,V^*_k), k \in K$ be the
connected components for $(E,V^*)$ in the above
relation. Let $E^\prime = \cup_{k \in K}  E_k$, and $G = K \cup (E \backslash
E^\prime)$ (the edges of $E \backslash E^\prime$ are edges with no vertices).
We have a surjection $E \ppil G$, and a commutative diagram
for $(E,V^*)$ and $(G,G)$ which is initial among such diagrams.

For each subset $U \sus V^*$, we get a sub-hypergraph $(E,U)$ (here each edge $e$ is contracted onto $U$, i.e. we remove from each edge the vertices in $V \backslash U$), with connected components $(F_\ell,U_\ell), \ell \in L$. Let  $F  = \cup_{\ell \in L} F_\ell$ and $E/U = L  \cup (E\backslash F)$.  This gives a surjection $E \ppil E/U$ where for each vertex $u \in U$ all edges containing $u$ are mapped to a single element in $E/U$. So we have a commutative diagram
with  $(E,U)$ and  $(E/U,E/U)$, which is initial among all commutative diagrams to discrete hypergraphs.

The comodule coproduct is then:
\[ \delta^d : H \pil H^d \te H, \quad
  (E,V) \overset{\delta^d}{\longmapsto} \sum_{U \sus V^*} (E,U) \te (E/U,U^c). 
\]
The comodule coproduct $\delta^d$ restricts to a coproduct
$\delta^d : H^d\pil H^d \te H^d$. (By abuse of notation we also use
$\delta^d$ for its restriction to $H^d$.)
There is a counit $\vep^d_\delta$:
\[ (E,V) \overset{}{\mapsto} \begin{cases} 1, & V = \emptyset \\
    0, & \text{ otherwise } \end{cases}. 
\]
which is the same as the map
$\epsilon_\Delta$ restricted to $H^d$.
This gives a bialgebra $(H^d,\mu, \delta^d, \eta, \vep^d_\delta)$.

\begin{proposition}
  The bialgebra $(H, \mu, \Delta^d, \eta, \epsilon^d_\Delta)$
  is a comodule bialgebra
  over the bialgebra $(H^d,\mu, \delta^d, \eta, \vep^d_\delta)$.

As consequence we have a double bialgebra $(H^d, \mu, \Delta^d, \delta^d)$.
\end{proposition}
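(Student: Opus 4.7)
The plan is to transport Theorem \ref{thm:EC-cointer1} through the dualization involution $d \colon H \to H$, following the same pattern used elsewhere in the paper to derive quartets from a base case. First I would check that $d$ is an involutive automorphism of the commutative algebra $(H, \mu, \eta)$: disjoint union of hypergraphs is insensitive to the swap of the roles of edges and vertices, and the empty hypergraph is self-dual. Since $d$ exchanges the conditions ``$E = E^*$'' (no empty edges) and ``$V = V^*$'' (no isolated vertices), it further restricts to an algebra isomorphism $H^\circ \to H^d$.

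Next I would record that, by construction, all the coalgebraic and comodule data on the $d$-side are conjugates of the original data by $d$:
\[
\Delta^d = (d \otimes d) \circ \Delta \circ d, \quad
\delta^d = (d \otimes d) \circ \delta \circ d, \quad
\epsilon_\Delta^d = \epsilon_\Delta \circ d, \quad
\vep^d_\delta = \vep_\delta \circ d.
\]
The two counit identities hold by inspection: $\epsilon_\Delta$ and $\vep_\delta$ detect emptiness of the vertex set and the edge set respectively, and these are interchanged by $d$.

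With this in hand, each of the four cointeraction axioms of Section \ref{sec:coint} for $\delta^d$ becomes the conjugate, by a suitable tensor power of $d$, of the corresponding axiom for $\delta$ already established in Theorem \ref{thm:EC-cointer1}. For the key axiom 2) the computation is
\begin{align*}
(\id \otimes \Delta^d) \circ \delta^d
 &= (d \otimes d \otimes d) \circ (\id \otimes \Delta) \circ \delta \circ d \\
 &= (d \otimes d \otimes d) \circ m_{13,2,4} \circ (\delta \otimes \delta) \circ \Delta \circ d \\
 &= m_{13,2,4} \circ (\delta^d \otimes \delta^d) \circ \Delta^d,
\end{align*}
where the last step uses that $d$ is an algebra morphism (so it intertwines $m_{13,2,4}$) together with $d \circ d = \id$. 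Axioms 1), 3) and 4) are verified in the same spirit. Finally, since $\delta$ restricts to $\delta^\circ \colon H^\circ \to H^\circ \otimes H^\circ$, conjugation yields a restriction $\delta^d \colon H^d \to H^d \otimes H^d$, and so $(H^d, \mu, \Delta^d, \delta^d)$ is a double bialgebra.

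The main obstacle is purely notational: keeping track of which factor lives in $H$ versus $H^d$ and placing the insertions of $d \circ d = \id$ correctly. No new combinatorial content beyond Theorem \ref{thm:EC-cointer1} is needed; the whole argument is a functoriality statement for cointeracting bialgebras under the involutive algebra isomorphism $d$.
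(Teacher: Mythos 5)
Your proposal is correct. The paper's own proof of this proposition is a one-line remark that it is ``similar to the proof of Theorem \ref{thm:EC-cointer1}'', i.e., the intended argument is to rerun the direct combinatorial verification of properties 1)--4) with the roles of edges and vertices exchanged, using the explicit description of $\delta^d$ in terms of subsets $U \sus V^*$ and contractions $E/U$. You instead derive the statement purely formally, as a transport of the already-established cointeraction through the involution $d$. This is a genuinely different (and arguably cleaner) route: it requires checking only that $d$ is an involutive automorphism of $(H,\mu,\eta)$ carrying $H^\circ$ onto $H^d$ and that all $d$-decorated structure maps are conjugates of the undecorated ones --- both of which hold by the paper's own definitions --- after which every axiom is a formal consequence of Theorem \ref{thm:EC-cointer1}, with no new combinatorics. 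Your key computation for axiom 2) is right, including the point that $d$ being an algebra morphism is what lets it pass through $m_{13,2,4}$; note this is also exactly where the analogous argument for the $c$- and $cd$-versions must switch to $\mu^c$, since $c$ intertwines $\mu$ with $\mu^c$ rather than fixing $\mu$. What the paper's approach buys is a hands-on description of $\delta^d$ (given in the surrounding text anyway); what yours buys is economy and a reusable functoriality principle covering all three derived quartet members at once.
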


\begin{proof} 
This is similar to the proof of Proposition \ref{thm:EC-cointer1}.
\end{proof}


\subsection{Coproduct $\delta^c$}

Let $H^c \sus H$ be the image of $H^\circ \sus H$ by complementation $c$.
Let $E^\times \sus E$ denote the edges whose vertex sets
are not the whole of $V$.
Then $H^c$ are generated by the hypergraphs $(E,V)$ where no edge contains all
the vertices. Alternatively hypergraphs with 
$E = E^\times $ or $\lk_EV  = \emptyset$.
Again $H^c \sus H$ is a sub-bialgebra of $(H, \mu^c, \Delta^c)$. 
Using the complement involution we get a coproduct:
\begin{equation*}    \delta^{c} : H \pil H^{c} \te H, \quad
	\delta^c =  (c \te c) \circ \delta \circ c. 
\end{equation*}
For a hypergraph $(F,V)$ denote the complement hypergraph as $(\ov{F},V)$.
So the latter is given by the composition $F \mto{h} P(V) \mto{c} P(V)$.
For $F \sus E^\times$ consider surjections $V \ppil W$ such that for each
$f \in F$, the complement vertex set $h(f)^c \sus V$ maps to a single point
in $W$. Let $V \ppil V/\ov{F}$ be initial among such surjections.
The comodule coproduct $\delta^c$ is then:
\[  \delta^{c} : H \pil H^{c} \te H, \quad
	(E,V) \overset{\delta^c}{\mapsto} \sum_{F \sus E^\times} (F,V) \te (F^c,V/\ov{F}). 
      \]
      This again restricts to a coproduct on $H^c$ giving a double bialgebra
      $(H^c, \mu^c, \Delta^c, \delta^c)$. 


\subsection{Coproduct $\delta^{cd}$}

Let $H^{cd} \sus H$ be the image of $H^\circ \sus H$ by $c \circ d$.
Let $V^\times \sus V$ denote the vertices not on every edge of $E$.
Then $H^{cd}$ are generated by the hypergraphs $(E,V)$
where no vertex is contained
in all edges. Alternatively hypergraphs with 
$V = V^\times $ or $\co_VE  = \emptyset$.
Again $H^{cd} \sus H$ is a sub-bialgebra of $(H, \mu^c, \Delta^{cd})$. 
Using the complement dual involution we get a coproduct:
\begin{equation*}    \delta^{cd} : H \pil H^{cd} \te H, \quad
      \delta^{cd} = (c\circ d \te c \circ d) \circ \delta \circ (c \circ d). 
\end{equation*}
For $U  \sus V^\times$  consider surjections $E \ppil G$ such that
for each $u \in U$, all edges not containing $u$  map to a single point in $G$.
Let $E \ppil E/\ov{U}$ be initial among such surjections.

The comodule coproduct is then:
\begin{equation*}  \delta^{cd} : H \pil H^{cd} \te H, \quad
  (E,V) \overset{\delta^{cd}}{\mapsto} \sum_{U \sus V^\times} (E,U) \te
  (E/\ov{U}, U^c). 
\end{equation*}
Note that vertices in the intersection of all edges,
only occur on the right side of the tensor product.
The map $\delta^{cd}$ again restricts to a coproduct on $H^{cd}$ giving
a double bialgebra
      $(H^{cd}, \mu^c, \Delta^{cd}, \delta^{cd})$. 


\subsection{Examples with $\delta$-coproducts}
\label{subsec:coint-ex}

Here are the various comodule coproducts for the path graph with three vertices:
\begin{align*}
  \cherry  & \overset{\delta} \longmapsto
             \trev \te \cherry  + 2 \, \linvert \, \point \te \linvert
             + \cherry \te \point
            \\
  \cherry & \overset{\delta^d} \longmapsto \edge \, \edge \te \cherry +
            2 \, \pointl \, \edge \te \cher + \pointll \te \linvert
            + 2 \cher \te \pointl + \pointl \pointl \te \pointl
            + \cherry \te \edge \\
   \cherry & \overset{\delta^{c}} \longmapsto \trev \te \cherry +
             2 \, \point \, \linvert \te \linvert \, \point
             + \cherry \te \trev \\
  \cherry  & \overset{\delta^{cd}} \longmapsto
             \edge \, \edge \edge \te \cherry  + 2 \, \edge \pointl \te \cher
             + \pointl \, \pointl \te \pointll           
\end{align*}

\begin{remark} The path graph is in $H^\circ, H^d, H^c$ but not in $H^{cd}$. 
 The three first coproducts may be taken in
  the extraction-contraction bialgebras, but
  the last case only occurs as a comodule coproduct. 
\end{remark}

Here are the comodule coproducts for the triangle hypergraph
with one edge on three vertices:
  \begin{align*}
    \trekant & \overset{\delta}{\longmapsto} \trev \te \trekant
               + \trekant \te \point \\
   \trekant & \overset{\delta^d}{\longmapsto} \edge \te \trekant
              + 3 \pointl \te \linvert + 3 \linvert \te \pointl
              + \trekant \te \edge \\
    \trekant & \overset{\delta^c}{\longmapsto} \trev \te \trekant
               \\
    \trekant & \overset{\delta^{cd}}{\longmapsto} \edge \te \trekant
  \end{align*}
  \begin{remark} The triangle is in $H^\circ$ and $H^d$, so the first
    two are also coproducts in the extraction-contraction algebras.
    The last two cases only occurs as comodule coproducts.
\end{remark}


\section{Associated polynomials}
\label{sec:poledge}

In the following $\kk$ is a field of characteristic zero, as we relate
to \cite{Fo22}.
The polynomial algebra $\kk[x]$ has a Hopf algebra
structure $(\mu, \Delta, 1, \epsilon_\Delta)$ where
$$
	\Delta(x) = x \te 1 + 1 \te x,
$$ 
and a bialgebra structure $(\mu, \delta, 1, \epsilon_\delta)$ where 
$$
	\delta(x) = x \te x.
$$
We shall identify $\kk[x] \te \kk[x]$ as $\kk[x,y]$, so these
maps can be written
\[ 
	x \mapsto x+y \text{ resp.~} x \mapsto x\cdot y. 
\]
If  $(B, \mu, \Delta, \delta)$ is a connected double bialgebra, by
the main result of \cite{Fo22} there is a unique algebra morphism
$\Phi : B \pil \kk[x]$ such that $\Phi$ is a bialgebra morphism
$(B,\mu,\Delta) \pil (\kk[x],\mu,\Delta)$ and a bialgebra
morphism $(B,\mu,\delta) \pil (\kk[x], \mu, \delta)$.
We compute the values of this for some simple hypergraphs.


We apply this to the double bialgebra $(H^\circ, \mu, \Delta, \delta)$.
Note that $(H^\circ,\mu,\Delta)$ is 
a connected bialgebra.
Hence there is a unique double bialgebra morphism
\begin{equation} \label{eq:pol-chio}
  \chi^\circ : (\Hy^{\circ},\mu, \Delta, \delta) \pil
  (\kk[x], \mu, \Delta, \delta). 
\end{equation}
We first compute this for some simple hypergraphs.
In the following let $n$ be the cardinality $|V|$ of the vertex set $V$.

\begin{proposition} \label{pro:ass-noedge}
  Consider the hypergraph $(\emptyset, V)$ with no edges.
  Then 
$$
	\chi^\circ(\emptyset, V) = x^n.
$$
\end{proposition}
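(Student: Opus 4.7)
The strategy is to reduce to the single-vertex case by multiplicativity, and then to pin down $\chi^\circ(\bullet)$ using the two compatibilities that a double bialgebra morphism must satisfy.

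First I would observe that in the algebra $(H^\circ,\mu)$, the hypergraph $(\emptyset,V)$ with $n=|V|$ vertices and no edges is simply the $n$-fold product $\bullet^{\, n}$ of the one-vertex hypergraph $\bullet=(\emptyset,\{v\})$. Since $\chi^\circ$ is an algebra morphism,
\[
\chi^\circ(\emptyset,V) \;=\; \chi^\circ(\bullet)^{n}.
\]
So the proposition reduces to showing $\chi^\circ(\bullet)=x$.

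Write $p(x) := \chi^\circ(\bullet)\in\kk[x]$. The compatibility of $\chi^\circ$ with $\Delta$ uses
\[
\Delta(\bullet) \;=\; \bullet\otimes\ben + \ben\otimes\bullet,
\]
so applying $(\chi^\circ\otimes\chi^\circ)$ and comparing with $\Delta(p(x))=p(x+y)$ in $\kk[x]\otimes\kk[x]=\kk[x,y]$ yields the Cauchy equation
\[
p(x+y) \;=\; p(x)+p(y),
\]
which forces $p(x)=cx$ for some $c\in\kk$. To nail down $c=1$, I would use the counit $\vep_\delta$. Since $\bullet$ has no edges, $\vep_\delta(\bullet)=1$; on the other side, $\vep_\delta$ on $\kk[x]$ is the character sending $x\mapsto 1$ (this is forced by $\delta(x)=x\otimes x$ and the counit axiom $(\vep_\delta\otimes\id)\circ\delta=\id$). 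Hence $1=\vep_\delta(\bullet)=\vep_\delta(p(x))=p(1)=c$, so $c=1$ and $p(x)=x$.

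As a consistency check, the $\delta$-compatibility gives the same conclusion: for the one-vertex hypergraph $E^{\ast}=\emptyset$, so the only term in \eqref{eq:extcon-delta} is $F=\emptyset$, producing $\delta(\bullet)=\bullet\otimes\bullet$; then $(\chi^\circ\otimes\chi^\circ)\delta(\bullet)=c^{2}xy$ while $\delta(p(x))=cxy$, forcing $c\in\{0,1\}$, and the counit argument above selects $c=1$. The main obstacle is really notational rather than mathematical: one must carefully verify that $\bullet$ lies in $H^\circ$ (it does, having no edges at all, hence no empty edges) and that the counits on both sides match as described. Combining gives $\chi^\circ(\emptyset,V)=x^{n}$.
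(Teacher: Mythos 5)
Your proof is correct and follows essentially the same route as the paper: reduce to the one-vertex hypergraph by multiplicativity and derive the Cauchy equation $p(x+y)=p(x)+p(y)$ from the $\Delta$-compatibility. The paper instead pins down the constant via the $\delta$-compatibility $p(xy)=p(x)p(y)$, which strictly speaking only yields $p\in\{0,x\}$, so your counit argument (forcing $p(1)=\varepsilon_\delta(\bullet)=1$) is the cleaner way to exclude $p=0$ and closes that small gap.
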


\begin{proof}
  Due to $(\emptyset, V) = \prod_{v \in V} (\emptyset, \{v\})$, it
  is enough to show that $p(x) := \chi(\emptyset, \{v \}) = x$.
  We have:
\[ 
  	(\emptyset, \{v\}) \overset{\Delta}{\longmapsto}
    (\emptyset, \emptyset) \te (\emptyset, \{v\})   + 
    (\emptyset, \{v \}) \te (\emptyset, \emptyset). 
\]
Mapping this to $\kk[x]$ and $\kk[x] \te \kk[x] \iso \kk[x,y]$
we get the equation
    \[ 
    	p(x+y) = 1 \cdot p(x) + p(y) \cdot 1. 
\]
    We also have:
 \[ 
 	(\emptyset, \{v\}) \overset{\delta}{\longmapsto}
    (\emptyset, \{v\}) \te (\emptyset, \{v\}), 
\]   
giving $p(xy) = p(x) \cdot p(y)$. These two equations give $p(x) = x$. 
\end{proof}

\begin{proposition}
  Let $(e,V)$ be the hypergraph consisting of a single edge $e$ containing all
  the vertices of $V$. Then for $n \geq 1$:
 $$
 \chi^\circ(e,V) = x^n - x.
$$
\end{proposition}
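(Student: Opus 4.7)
The plan is to apply the universal property from Foissy \cite{Fo22}, which produces $\chi^\circ$ as the unique double bialgebra morphism. Writing $q_n(x) := \chi^\circ(e,V)$ for $|V| = n$, I aim to pin down $q_n$ by exploiting the two functional equations arising because $\chi^\circ$ is simultaneously a bialgebra morphism for $\Delta$ and for $\delta$, combined with the already known value $\chi^\circ(\emptyset, W) = x^{|W|}$ from Proposition \ref{pro:ass-noedge}.

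First, I would compute $\Delta(e,V)$ explicitly. Since $h(e) = V$, the restriction $E_{|U}$ equals $\{e\}$ precisely when $V \subseteq U$, i.e.\ $U = V$, and is empty otherwise. Hence
\[
  \Delta(e,V) = (e,V) \otimes \ben + \ben \otimes (e,V) + \sum_{\emptyset \neq U \subsetneq V} (\emptyset, U) \otimes (\emptyset, U^c).
\]
Applying $\chi^\circ$, identifying $\kk[x] \otimes \kk[x]$ with $\kk[x,y]$, and invoking Proposition \ref{pro:ass-noedge} gives
\[
  q_n(x+y) = q_n(x) + q_n(y) + (x+y)^n - x^n - y^n.
\]
Setting $r_n(x) := q_n(x) - x^n$, this reads $r_n(x+y) = r_n(x) + r_n(y)$, so the polynomial $r_n$ is additive and hence a linear form with no constant term: $r_n(x) = -c_n x$ for some scalar $c_n$. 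Thus $q_n(x) = x^n - c_n x$.

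Second, I would compute $\delta(e,V)$. Since $E = E^* = \{e\}$ has cardinality one, the sum over $F \subseteq E^*$ in \eqref{eq:extcon-delta} has only two terms: for $F = \emptyset$ the contraction is trivial, yielding $(\emptyset, V) \otimes (e,V)$; for $F = \{e\}$ the hypergraph $(\{e\}, V)$ has $V$ as a single connected vertex set (all vertices lie on $e$), so in the notation of Subsection \ref{subsec:EC-quot} the set $V/\{e\}$ is a single point. Therefore
\[
  \delta(e,V) = (\emptyset, V) \otimes (e,V) + (e,V) \otimes (\emptyset, \{*\}).
\]
Applying $\chi^\circ$, using that it intertwines $\delta$ with the coproduct $x \mapsto x \otimes x$ on $\kk[x]$, and again invoking Proposition \ref{pro:ass-noedge} yields
\[
  q_n(xy) = x^n \cdot q_n(y) + q_n(x) \cdot y.
\]

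To finish, I would substitute $q_n(x) = x^n - c_n x$ into the multiplicative identity; the $x^n y^n$ and $c_n xy$ terms cancel, leaving $x^n y (1 - c_n) = 0$, which forces $c_n = 1$ and hence $q_n(x) = x^n - x$. I do not anticipate a substantial obstacle; the only delicate points are the connectedness claim that $V/\{e\}$ collapses to a single point (immediate from the definitions in Subsection \ref{subsec:EC-quot}) and the careful bookkeeping of which restricted edge-sets $E_{|U}$ are empty in the computation of $\Delta(e,V)$.
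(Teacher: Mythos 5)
Your proposal is correct and follows essentially the same route as the paper: both derive the two functional equations $q_n(xy)=x^nq_n(y)+q_n(x)y$ and $q_n(x+y)=q_n(x)+q_n(y)+(x+y)^n-x^n-y^n$ from the $\delta$- and $\Delta$-coproducts of $(e,V)$ and solve them. The only (immaterial) difference is the order: the paper uses the $\delta$-equation to force the shape $a(x^n-x)$ and the $\Delta$-equation to fix $a=1$, whereas you use the $\Delta$-equation to get $x^n-c_nx$ and the $\delta$-equation to fix $c_n=1$.
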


\begin{remark} We see that for the hypergraph with a single edge containing
  a single vertex $(e, \{v \})$,
  the associated polynomial is zero, $ \chi(e,\{v \})=0$.
  However, by Proposition \ref{pro:ass-noedge}, if we have a single vertex
  and no edge $(\emptyset, \{v\})$, the associated polynomial is $x$.
\end{remark}

\begin{proof} Let $p(x) = \chi(e,V)$. 
 \[ 
 	(e,V) \overset{\delta}{\longmapsto}
    (\emptyset, V) \te (e,V) + (e, V) \te (\emptyset, \{*\}). 
\]   
Again mapping this to $\kk[x]$ and $\kk[x] \te \kk[x] \iso \kk[x,y]$ we get the
equation
\[ 
	p(xy) = x^n \cdot p(y) + p(x)  \cdot y. \]
The only solutions are $p(x) = a(x^n - x)$. 
The $\Delta$-coproduct is:
  \[ 
  	(e, V) \overset{\Delta}{\longmapsto} \een \te (e,V)
     + \sum_{\emptyset \subset U \subset V} (\emptyset, U) \te (\emptyset, U^c) + 
   (e,V) \te \een . 
 \]
  Mapping to $\kk[x]$ and $\kk[x,y]$, this gives for instance for $n = 3$:
  \[ 
  p(x+y) = p(x) + 3x^2y + 3xy^2 + p(y).
 \]
We easily verify  that $a = 1$.
\end{proof}

By sending hypergraphs $(E,\emptyset) \mapsto 1$, \eqref{eq:pol-chio}
may be extended to a bialgebra morphism
\begin{equation} \label{eq:pol-chi}
  \chi : (\Hy,\mu, \Delta) \pil (\kk[x], \mu, \Delta).
\end{equation}


\section{The chromatic polynomial for hypergraphs}
\label{sec:chrompol}

The map $\chi$ in the previous section gives for
each hypergraph $(E,V)$ a polynomial $\chi_{E,V}(x)$. For classical
graphs, $\chi_{E,V}(n)$ is the chromatic polynomial counting
the number of colorings of the vertices
of $(E,V)$ with
$n$ colors, such that on every edge the two
vertices have different colors, see \cite{FoChrom}.

For general hypergraphs we now show
that it counts the number of colorings of vertices, using $n$ colors,
such that no edge is monochromatic. So in this case, perhaps more informative
than {\it chromatic} polynomial would be to call it the
{\it non-monochromatic} polynomial. However the former is standard
in the literature.


\subsection{Colorings}

By \eqref{eq:extcon-delta}
for the bialgebra coproduct $\delta^\circ$, the image of $(E,V)  \in H^\circ$
is a sum of terms for $F \sus E$: 
\[ 
	(F,V) \te (F^c, V/F). 
\]

\begin{lemma} Let $U \sus V$ be the vertex support of edges in $(F,V)$
  (i.e.~all vertices incident to some edge in $F$). 
  The hypergraph $(F^c,V/F)$ has no loops (edges with a single vertex) if and only if
  for each connected component $(F_i,U_i)$ of $(F,U)$, the
  edge set $F_i$ is induced, i.e.,~$F_i = F_{|U_i}$.
\end{lemma}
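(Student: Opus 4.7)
The plan is to describe the quotient map $\pi : V \twoheadrightarrow V/F = I \sqcup (V \setminus U)$ explicitly and then characterise which edges of $F^c$ become single-vertex edges after applying it. By construction $\pi$ collapses each component $U_i$ to its label $i \in I$ and fixes the vertices of $V \setminus U$. An edge $e \in F^c$ has, in $(F^c, V/F)$, vertex set $\pi(h(e))$, and since $(E,V) \in H^\circ$ has no empty edges, this image is a singleton precisely when $h(e)$ lies in a single fiber of $\pi$. Thus either $h(e) \subseteq U_i$ for some $i \in I$, or $h(e) = \{v\}$ for a single $v \in V \setminus U$. The latter case is a pre-existing single-vertex edge at an $F$-isolated vertex; the lemma reads most naturally as characterising the loops created by the contraction step, so I set this case aside (equivalently, one assumes no such pre-existing loops).

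For the direction $(\Rightarrow)$ I would argue contrapositively. If some component $(F_i, U_i)$ is not induced, pick $e \in E$ with $h(e) \subseteq U_i$ but $e \notin F_i$. Any edge of $F$ whose vertices all lie in $U_i$ already belongs to the component $F_i$, so $e \notin F$, hence $e \in F^c$. Then $\pi(h(e)) = \{i\}$, producing a single-vertex edge in $(F^c, V/F)$. For the direction $(\Leftarrow)$, assume each $F_i$ is induced and suppose some $e \in F^c$ becomes a single-vertex edge in $V/F$. By the dichotomy above, either $h(e) \subseteq U_i$ for some $i$, in which case the induced hypothesis gives $e \in F_i \subseteq F$, contradicting $e \in F^c$; or $h(e) = \{v\}$ with $v \in V \setminus U$, the excluded case.

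The main obstacle is disambiguating the subscript notation $F_{|U_i}$. Read literally as $\{e \in F : h(e) \subseteq U_i\}$, the identity $F_i = F_{|U_i}$ is tautological, so the intended meaning must be $F_{|U_i} = E_{|U_i}$, i.e.\ every edge of $E$ whose vertex set lies in $U_i$ already lies in the component $F_i$. Once this is clarified the lemma reduces to the fiber analysis of $\pi$ above.
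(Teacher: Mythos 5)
Your proof is correct and follows the same approach as the paper's: analyze the fibers of the contraction $V \twoheadrightarrow V/F$ and determine which edges of $F^c$ collapse to a single vertex. In fact you go further than the paper, whose entire proof is the one-line observation establishing only the direction ``no loops $\Rightarrow$ induced''; your reading of $F_{|U_i}$ as $E_{|U_i}$ matches the paper's implicit usage (its proof speaks of an edge of $F_{|U_i}$ becoming a loop in $(F^c,V/F)$, so that edge must lie in $E\setminus F$), and the caveat you flag about pre-existing single-vertex edges at vertices outside the support of $F$ (e.g.\ $F=\emptyset$ with a loop present in $E$) is a genuine, if minor, imprecision in the stated equivalence that the paper's proof does not address.
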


\begin{proof}
  If there was an edge in $F_{|U_i}$ which is not in $F_i$, then this edge would
  become a loop in $(F^c, V/F)$ as the vertices of $U_i$ map to a
  single point in $W$.
\end{proof}

\begin{definition}
  A {\it coloring} of the hypergraph $(E,V)$ by a set $C$, whose
  elements are called colors, 
  is a map $V \pil C$ such that no edge $e \in E$ is monochromatic,
  i.e., for each edge not all the vertices have the same color.
\end{definition}

In particular note:
\begin{itemize}
  \item If there is some edge with only one vertex,
    the hypergraph $(E,V)$ has no coloring,
  \item If $V \neq \emptyset$ and $C = \emptyset$, there is no coloring
    since there is no map $V \pil C$.
  \item If $V = \emptyset$ there is a unique map $V \pil C$ for
    every $C$ so there is {\it one} coloring for each $C$.
  \end{itemize}
Let $\chi_{E,V}(n)$ be the number of colorings of a hypergraph $(E,V)$ with
$n$ colors. We see that $\chi_{E,V}(0) = 0$ when $|V| \geq 1$.
From \cite{Doh,Hel} it follows that $\chi_{E,V}(n)$ is a polynomial,
the {\it chromatic polynomial} of the hypergraph $(E,V)$.

\begin{example} {} \hfill

\begin{itemize}
    \item When $V = \emptyset $ then
$\chi_{E,V}(n) = 1$ for all $n \geq 0$.
\item When  $E = \emptyset$, then $\chi_{E,V}(n) = n^{|V|}$. 
\item When $E$ is a single edge containing all the vertices $V$,
  then $\chi_{E,V}(n) = n^{|V|} - n$, the polynomial
  we computed in the previous section.
\end{itemize}
\end{example}

\begin{theorem} \label{thm:col-chi} Let $\kk = \QQ$.
  The double bialgebra morphism $\chi^\circ$ of
  \eqref{eq:pol-chio}
  maps the hypergraph $(E,V)$ to the chromatic polynomial $\chi_{E,V}(x)$.
\end{theorem}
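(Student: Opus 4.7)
The approach is to invoke the uniqueness statement in \cite{Fo22}: since $(H^\circ,\mu,\Delta)$ is a connected bialgebra, there is exactly one double bialgebra morphism $H^\circ \to \QQ[x]$. It therefore suffices to show that the map $p\colon H^\circ \to \QQ[x]$ sending $(E,V)$ to its chromatic polynomial $\chi_{E,V}(x)$ is a morphism of double bialgebras. Since all required identities are polynomial in one or two variables, they may be verified by evaluation at arbitrary pairs $(n,m)$ of nonnegative integers, where the two sides become counting problems comparable by a combinatorial bijection.

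The verification splits into three parts. Multiplicativity $\chi_{(E,V)\sqcup(E',V')}(n)=\chi_{E,V}(n)\,\chi_{E',V'}(n)$ is immediate from the definition. For $\Delta$-compatibility I split a color set $C$ of size $n+m$ as $C=C_1\sqcup C_2$; a coloring $c\colon V\to C$ determines $U:=c^{-1}(C_1)$, and since edges with vertices in both $U$ and $U^c$ are automatically non-monochromatic, $c$ is proper iff $c|_U$ properly colors $(E_{|U},U)$ and $c|_{U^c}$ properly colors $(E_{|U^c},U^c)$. Summing over $U$ gives
\[ \chi_{E,V}(n+m)=\sum_{U\subseteq V}\chi_{E_{|U},U}(n)\,\chi_{E_{|U^c},U^c}(m). \]

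For $\delta$-compatibility I take $C=C_1\times C_2$ with $|C_1|=n$, $|C_2|=m$, write a proper coloring as $c=(c_1,c_2)$, and---this is the crucial choice---associate to it the subset
\[ F\;:=\;\{\,e\in E \,:\, c_2 \text{ is constant on } e\,\}\;\subseteq\; E = E^*. \]
On each $e\in F$ properness of $c$ forces $c_1$ to be non-constant, so $c_1$ is a proper coloring of $(F,V)$. A path-of-edges argument shows $c_2$ is constant on each connected component of $(F,V)$, so it descends to $\bar c_2\colon V/F\to C_2$; moreover, for every $e\in F^c$ the induced function $\bar c_2$ is non-constant on the image of $h(e)$, so $\bar c_2$ is a proper coloring of $(F^c,V/F)$. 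This assignment is reversible by pulling $\bar c_2$ back along $V\twoheadrightarrow V/F$ and pairing with $c_1$, yielding
\[ \chi_{E,V}(nm)=\sum_{F\subseteq E^*}\chi_{F,V}(n)\,\chi_{F^c,V/F}(m). \]

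The main obstacle is this last step: one must make the correct choice of $F$ in terms of $c_2$ (rather than the superficially symmetric choice in terms of $c_1$), and verify that the map $c \mapsto (F, c_1, \bar c_2)$ really is a bijection onto the full index set of the $\delta$-coproduct. The key observation is that the connected components of $(F,V)$ are precisely the equivalence classes on which $c_2$ is forced to be constant, and this is exactly what makes $\bar c_2$ well-defined on $V/F$. Subsets $F$ outside the image of the bijection---namely those containing an edge $e$ but omitting some $e' \in F^c$ with $h(e')\subseteq h(e)$---produce a single-vertex edge in $(F^c, V/F)$ whose contribution $\chi_{F^c,V/F}$ vanishes, so such $F$ drop out of the sum automatically.
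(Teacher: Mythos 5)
Your proposal is correct and takes essentially the same route as the paper: it invokes Foissy's uniqueness theorem and verifies that the chromatic polynomial is a double bialgebra morphism via the two color-set bijections, splitting $C=C_1\sqcup C_2$ for $\Delta$ and taking $C=C_1\times C_2$ for $\delta$ with $F$ the set of edges on which the second coordinate is constant --- exactly the paper's Corollaries \ref{cor:colXuY} and \ref{cor:colXxY}, including the observation that terms where $(F,V)$ or $(F^c,V/F)$ acquires a single-vertex edge contribute zero.
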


We prove this at the end of this section.

  \begin{remark}
     Colorings of hypergraphs
     are discussed in \cite[Chap.19]{Ber} but concerns the chromatic number,
     and not the polynomial.
In particular, this is defined by requiring the edges not
to be monochromatic. This definition of the chromatic number of
a hypergraph was introduced by Erd\"os and Hanal \cite{EH}.
The chromatic {\it polynomial} for hypergraphs seems first to have
been discussed
by T.~Helgason in \cite{Hel}. Somewhat later K.~Dohmen \cite{Doh}
discusses basic properties
of this polynomial (based on his Ph.D.~dissertation) and shows among other things that its
coefficients are integers.
I.~Tomescu \cite{Tom} computes the polynomial for classes of hypergraphs,
and consider its coefficients. R.~Zhang and F.~Dong \cite{ZD} is
a recent presentation of various facts
and properties concerning this polynomial, as well as some
open questions, especially concerning its zeroes.
Algorithmic aspects concerning colorings of uniform hypergraphs
are considered in \cite{Su,Di}, where one is concerned with finding a
$c$-coloring when $c \geq k$ and the hypergraph is promised to be $k$-chromatic.

In \cite[Sec.16,  18.1]{AA} M.~Aguiar and F.~Ardila give a general setting
associating to characters on Hopf monoids a polynomial invariant.
For the character
$\epsilon_\delta$ of the (restriction) bialgebra of graphs
this becomes the chromatic polynomial.
The extension of this to hypergraphs using
the restriction bialgebra and character $\epsilon_\delta$,
gives the hypergraph polynomial we consider here.

Using instead the restriction-descent bialgebra for hypergraphs
considered in Subsection \ref{subsec:res-descent} (originally
in \cite{AA}), 
Aval, Karabaghossian, and Tanasa \cite{AKT} extend (essentially) the
character $\epsilon_\delta$,
and get another hypergraph polynomial, see Remark \ref{rem:res-contract}.
\end{remark}


\subsection{Colorings built from two disjoint color sets}

Before proving Theorem \ref{thm:col-chi} we develop some auxiliary results.

\begin{proposition} Let $C = X \cup Y$ be a disjoint union. The colorings of $(E,V)$ by $C$ is in bijection with the following data set:
  \begin{itemize}
    \item A subset $U \sus V$, 
    \item A coloring of $(E_{|U}, U)$ by $X$,
    \item A coloring of $(E_{|U^c},U^c)$ by $Y$.
    \end{itemize}
\end{proposition}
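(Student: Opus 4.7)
The plan is to exhibit maps in both directions and check they are mutually inverse. Given a coloring $\phi : V \to C$ of $(E,V)$, set
\[
  U := \phi^{-1}(X), \qquad U^c = \phi^{-1}(Y),
\]
and let $\phi_X := \phi|_U : U \to X$ and $\phi_Y := \phi|_{U^c} : U^c \to Y$ be the restrictions. I would first check that $\phi_X$ is a coloring of $(E_{|U}, U)$: any edge $e \in E_{|U}$ satisfies $e \sus U$ by definition, so $\phi_X$ and $\phi$ agree on $e$, and a monochromatic $\phi_X$ would give a monochromatic $\phi$, contradicting the hypothesis. The same argument works for $\phi_Y$ on $(E_{|U^c}, U^c)$.

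For the inverse direction, given the triple $(U, \phi_X, \phi_Y)$, I would define $\phi : V \to C$ by $\phi|_U = \phi_X$ and $\phi|_{U^c} = \phi_Y$, which is well-defined because $X$ and $Y$ are disjoint inside $C$. The main thing to verify is that $\phi$ is again a coloring, i.e., that no $e \in E$ is monochromatic under $\phi$. I would split into three cases according to the location of $e$:
\begin{itemize}
  \item if $e \sus U$, then $e \in E_{|U}$, and $\phi_X$ is non-monochromatic on $e$ by assumption;
  \item if $e \sus U^c$, symmetric;
  \item otherwise $e$ meets both $U$ and $U^c$, so $\phi$ takes values in both $X$ and $Y$ on $e$, and since $X \cap Y = \emptyset$ it takes at least two distinct values.
\end{itemize}

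It remains to observe that the two constructions are mutually inverse: starting from $\phi$, forming $(U, \phi_X, \phi_Y)$, then reassembling, plainly returns $\phi$, since the reassembled map agrees with $\phi$ on $U$ and on $U^c$; and starting from a triple $(U, \phi_X, \phi_Y)$, the reassembled $\phi$ has $\phi^{-1}(X) = U$ (because the image of $\phi|_U$ lies in $X$ and the image of $\phi|_{U^c}$ lies in $Y$, and $X$, $Y$ are disjoint), whose restrictions to $U$ and $U^c$ recover $\phi_X$ and $\phi_Y$. I do not expect any serious obstacle; the only mildly subtle point is the third case of the case split, where one uses the disjointness of $X$ and $Y$ in an essential way, and this is precisely why the proposition fails if $X \cap Y \neq \emptyset$.
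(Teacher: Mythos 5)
Your proof is correct and follows essentially the same route as the paper's: restrict the coloring to $U = \phi^{-1}(X)$ and $U^c = \phi^{-1}(Y)$ in one direction, reassemble in the other, with the key observation being that an edge meeting both $U$ and $U^c$ is automatically non-monochromatic because $X$ and $Y$ are disjoint. You simply spell out the verification of mutual inverses more explicitly than the paper does.
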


\begin{proof}
If the map $V \pil C$ gives a coloring, we get a subset $U \sus V$, those vertices colored
by $X$ and the complement $U^c$ colored by $Y$.
The map $U \pil X$ then is a coloring of $(E_{|U}, U)$ and
$U^c \pil Y$ is a coloring of $(E_{|U^c}, U^c)$.
Conversely, given such a data set, we get a map $V \pil X \cup Y$.
This gives a coloring, since if $e \in E$ is an edge not in $E_{|U}$
or in $E_{|U^c}$, then $e$ contains vertices from both
$U$ and $U^c$ and these will have different colors.
\end{proof}

\begin{corollary} \label{cor:colXuY}
The number of colorings $\chi_{E,V}(n)$ is a polynomial in $n$ with zero constant term if $V$ is not empty, and 
  \begin{equation} \label{eq:col-nm}
    \chi_{E,V}(n+m) = \sum_{U \sus V}
    \chi_{E_{|U},U}(n) \cdot \chi_{E_{|U^c}, U^c}(m).
    \end{equation}
\end{corollary}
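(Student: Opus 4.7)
The plan is to derive this corollary as a direct consequence of the preceding proposition. The key observation is that if $X$ and $Y$ are disjoint finite sets with $|X|=n$ and $|Y|=m$, then $|X \cup Y| = n+m$, and the proposition gives a bijection between the set of colorings counted by $\chi_{E,V}(n+m)$ and the disjoint union over $U \sus V$ of pairs of colorings counted by $\chi_{E_{|U},U}(n)$ and $\chi_{E_{|U^c},U^c}(m)$. Thus equality \eqref{eq:col-nm} holds as an identity of integers for every pair $(n,m)$ of nonnegative integers.

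For the polynomiality claim, I would cite \cite{Doh,Hel} where $\chi_{E,V}(n)$ is established to be a polynomial in $n$. (Alternatively, one can prove this by induction on $|E|$ via a deletion/merging recurrence, or by directly expanding using inclusion--exclusion on the set of edges that are forced to be monochromatic; in either case the resulting counting function is visibly polynomial.) Once polynomiality is granted, the identity \eqref{eq:col-nm}, which has been verified for all nonnegative integer pairs $(n,m)$, must hold as an identity of polynomials in two variables, since two polynomials agreeing on $\mathbb{N}\times\mathbb{N}$ agree everywhere.

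For the vanishing of the constant term when $V \neq \emptyset$, it suffices to observe that $\chi_{E,V}(0)$ counts maps $V \pil \emptyset$, of which there are none when $V$ is nonempty; hence $\chi_{E,V}(0) = 0$, which says exactly that the polynomial has zero constant term.

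The main obstacle is really just establishing polynomiality, but this is standard and already cited. The combinatorial content of the corollary—the splitting formula \eqref{eq:col-nm}—is immediate from the bijection in the proposition, and everything else is bookkeeping.
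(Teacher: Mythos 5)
Your proof is correct, and the first part (deriving the identity \eqref{eq:col-nm} from the preceding bijection) is exactly what the paper does. Where you diverge is in the remaining two claims. The paper proves polynomiality \emph{from the identity itself}: setting $m=1$, the forward difference $\chi_{E,V}(n+1)-\chi_{E,V}(n)$ is, by \eqref{eq:col-nm}, a sum over proper subsets $U\subseteq V$ of products of chromatic functions of strictly smaller hypergraphs, hence a polynomial by induction, whence $\chi_{E,V}$ is a polynomial; and it gets the vanishing constant term by setting $n=m$ in \eqref{eq:col-nm}, which forces the constant term $c$ to satisfy $c=2c$. You instead import polynomiality from \cite{Doh,Hel} (which the paper itself also cites just before this corollary, so this is legitimate, and your inclusion--exclusion sketch $\sum_{S\subseteq E}(-1)^{|S|}n^{|V/S|}$ does give a direct proof valid for all $n\ge 0$), and then read off the constant term as $\chi_{E,V}(0)=0$ since there is no map $V\to\emptyset$ for $V\ne\emptyset$. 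Your route is shorter and the constant-term argument is more direct; the paper's route buys self-containedness and, more to the point, showcases that the splitting identity \eqref{eq:col-nm} alone already forces both polynomiality and the vanishing of the constant term, which is the structural fact actually needed for the double-bialgebra morphism in Theorem \ref{thm:col-chi}. The one small care point in your version is that the cited polynomiality must be known to hold at $n=0$ as well (so that the constant term really equals the count of $0$-colorings); this is the case for the inclusion--exclusion formula, so no gap results.
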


\begin{proof}
  The identity above is immediate from the proposition above.
  
  We show that $\chi_{E,V}(n)$ is a polynomial by induction on
  the cardinality of $E$ and of $V$, with zero constant term
  when $V \neq \emptyset$.

  If $V$ is empty $\chi_{E,V}(n)$ is the constant polynomial $1$.
  If $E$ is empty,
  $\chi_{E,V}(n) = n^{|V|}$.
Let then $m = 1$, the difference $\chi_{E,V}(n+1)- \chi_{E,V}(n)$ is
a polynomial for $n \geq 0$, by induction and the expression
\eqref{eq:col-nm} above.
  Whence $\chi_{E,V}(n)$ is a polynomial for $n \geq 1$. But the
  above identity gives by induction when $n = m$ that
  the constant term of $\chi_{E,V}(2n)$ is twice the constant
  term of $\chi_{E,V}(n)$, and so this constant term must be zero.
\end{proof}

Now we consider colorings by $X \times Y$. 

\begin{proposition}
  Let $C = X \times Y$. The colorings of $(E,V) \in H^\circ$ by $C$ is in bijection
  with the following data set:
  \begin{itemize}
  \item A subset $F \sus E$ and a coloring of $(F,V)$ by $X$,
  \item A coloring of $(F^c, V/F)$ by $Y$.
  \end{itemize}
  Note that in any such coloring, both hypergraphs $(F,V)$ and
  $(F^c, V/F)$ must be loopless.
\end{proposition}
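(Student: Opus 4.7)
The plan is to construct mutually inverse maps between colorings $\phi \colon V \to X \times Y$ of $(E,V)$ and triples $(F,\psi_X,\psi_Y)$ as described, letting the failure of monochromaticity on each edge decide whether $e \in F$ or $e \in F^c$.

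Given a coloring $\phi=(\phi_X,\phi_Y)$, I set $F = \{e \in E \mid \phi_Y|_{h(e)} \text{ is constant}\}$. Since $\phi$ is non-monochromatic on every edge, $\phi_X|_{h(e)}$ must be non-constant for $e \in F$, so $\phi_X$ is a coloring of $(F,V)$ by $X$. Moreover, any two vertices joined by an edge in $F$ share a common $\phi_Y$-value, so by the chain-of-edges description of connectedness recalled in Subsection \ref{subsec:EC-quot}, $\phi_Y$ is constant on each connected component of $(F,V)$ and descends to a map $\bar\phi_Y \colon V/F \to Y$. For $e \in F^c$, the non-constancy of $\phi_Y|_{h(e)}$ passes to the image $\bar h(e) \sus V/F$, so $\bar\phi_Y$ is a coloring of $(F^c, V/F)$.

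Conversely, given a triple $(F,\psi_X,\psi_Y)$, I pull $\psi_Y$ back along $V \ppil V/F$ to a map $\tilde\psi_Y \colon V \to Y$ and set $\phi = (\psi_X,\tilde\psi_Y)$. Edges in $F$ are handled by $\psi_X$ directly; for $e \in F^c$, one needs $\tilde\psi_Y|_{h(e)}$ to be non-constant, and this is precisely where looplessness of $(F^c, V/F)$ enters: it forces the image $\bar h(e)$ to contain at least two distinct elements of $V/F$, on which $\psi_Y$ is non-constant by hypothesis, so $\tilde\psi_Y$ is non-constant on $h(e)$. The two constructions are inverse by inspection, giving the claimed bijection.

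The closing assertion about looplessness is automatic: a hypergraph admitting a coloring cannot have a single-vertex edge, since such an edge is tautologically monochromatic on its unique vertex. The main conceptual step is the descent of $\phi_Y$ to $V/F$; once that is in hand, the rest is bookkeeping dictated by which coordinate of $\phi$ breaks monochromaticity on each edge.
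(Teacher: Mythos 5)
Your proposal is correct and follows essentially the same route as the paper's proof: take $F$ to be the edges monochromatic under the projection to $Y$, observe that the $X$-component then colors $(F,V)$ while the $Y$-component descends along $V \twoheadrightarrow V/F$ to color $(F^c,V/F)$, and invert by pulling back. The only cosmetic difference is that you foreground looplessness of $(F^c,V/F)$ in the converse direction, whereas what is actually used there is just that $\psi_Y$ is non-constant on each image edge, which already subsumes looplessness.
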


\begin{proof}
  Given a coloring by $X \times Y$, let $F \sus E$ be all edges
  which are monochromatic for the composition $V \pil X \times Y \pil Y$.
  Let $U$ be the vertex support of $F$, and let $I$ index the
  connected components of $(F,U)$. Then $V/F = I \cup (V \backslash U)$
  and the map $V \pil Y$ descends to a map $V/F \pil Y$.
  Clearly the composition $V \pil X \times Y \pil X$ gives
  a coloring of $(F,V)$. Furthermore the composition $V \pil Y$
  gives a coloring of $(F^c, V/F)$, since the edges in $F^c$ are
  non-monochromatic for $Y$.

  Conversely, given a data set as stated in the proposition above,
  we get a map $V \pil X \times Y$,
  and this is obviously a coloring of $(E,V)$. Furthermore these
  correspondences are seen to be inverse to each other. 
\end{proof}

\begin{corollary} \label{cor:colXxY}
  The number of colorings fulfils the identity
  \[
  	\chi_{E,V}(nm) = \sum_{F \sus E}
      \chi_{F,V}(n) \cdot \chi_{F^c,V/F}(m), 
 \]
 where $(F,V)$ and $(F^c,V/F)$ are as in the definition
 (of the restriction $\delta^\circ$) of $\delta$, in \eqref{eq:extcon-delta}. 
\end{corollary}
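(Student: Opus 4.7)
The plan is to deduce this corollary directly from the bijection established in the preceding proposition, by counting both sides in the case where the color set has the form $X \times Y$.

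First, I would fix finite sets $X$ and $Y$ with $|X| = n$ and $|Y| = m$, and set $C = X \times Y$, so $|C| = nm$. The left-hand side $\chi_{E,V}(nm)$ then counts the number of colorings of $(E,V)$ by $C$. The preceding proposition puts these colorings in bijection with data sets consisting of a subset $F \sus E$, a coloring of $(F,V)$ by $X$, and a coloring of $(F^c, V/F)$ by $Y$ (and the construction requires $(E,V) \in H^\circ$, which is our hypothesis). Since the choice of $F$ is independent of the two colorings, and these two colorings are also independent of each other once $F$ is fixed, the total count is
\[
\sum_{F \sus E} \chi_{F,V}(n) \cdot \chi_{F^c, V/F}(m).
\]
This gives the identity at all positive integer values of $n$ and $m$.

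Next, by Corollary \ref{cor:colXuY}, each function $\chi_{E,V}$, $\chi_{F,V}$, and $\chi_{F^c, V/F}$ is a polynomial in its argument. Therefore both sides of the proposed identity are polynomials in the variables $n, m$. Since they agree on the infinite set $\{(n,m) : n, m \geq 1\}$, they agree as polynomials, which gives the corollary.

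There is essentially no obstacle: the preceding proposition does all the real work. The only minor point to check is that the sum may harmlessly be taken over all $F \sus E$ (rather than only over $F$ for which $(F,V)$ and $(F^c, V/F)$ are loopless): if either hypergraph has a loop then the corresponding chromatic polynomial vanishes identically, so such terms contribute nothing on the right-hand side, matching the bijection's requirement that both $(F,V)$ and $(F^c, V/F)$ be loopless for the data set to actually support a coloring.
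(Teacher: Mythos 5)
Your proposal is correct and follows essentially the same route as the paper: the identity is read off directly from the bijection in the preceding proposition, with the observation that terms where $(F,V)$ or $(F^c,V/F)$ has a loop contribute zero, so the sum may run over all $F \sus E$. The additional remark about extending from positive integers to a polynomial identity is harmless but not needed, since the corollary is stated as a counting identity.
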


\begin{proof}
  This follows by the proposition above. Note that if $(F,V)$ has a loop
  then $\chi_{F,V}(n)$ is zero and if $(F^c,V/F)$ has a loop, then
  $\chi_{F^c,W}(m)$ is zero.
\end{proof}

\begin{proof}[Proof of Theorem \ref{thm:col-chi}.]
This follows from Corollaries \ref{cor:colXuY} and
\ref{cor:colXxY}.
\end{proof}

\begin{remark}
As mentioned in \eqref{eq:pol-chi} $\chi^\circ$ extends
to a bialgebra homomorphism $\chi : (H,\mu,\Delta) \pil (\kk[x], \mu, \Delta)$,
sending each $(E,\emptyset) \mapsto 1$.
This will be convenient as we then to any hypergraph have a chromatic
polynomial. However for the $(\{e\}, V)$ the single edge containing
all the vertices $V$, the hypergraph polynomial is $x^n - x$ where
$n = |V|$. This formula is valid for $n \geq 1$, but not for $n = 0$,
when the chromatic polynomial by our extension is $1$. This suggests
the naturality of the double bialgebra setting $(H^\circ, \Delta, \delta)$
for the chromatic polynomial, and that the extension has some lack
of naturality.
\end{remark}


\section{Four chromatic polynomials}
\label{sec:quartpol}

Associated to the double bialgebra $(H^d,\mu,\Delta^d, \delta^d)$, there
is also as above a unique double bialgebra morphism to
$(\kk[x], \mu, \Delta, \delta)$. It gives a polynomial $\chi^d_{E,V}(x)$, and
as in the end of the previous section, we may extend its domain to
all hypergraphs $(E,V) \in H$. Then
$\chi^d_{E,V}(n)$ counts the number of colorings of the edges of
$(E,V)$, with $n$ colors,
such that there is no vertex where all the edges incident to
this vertex have the same color.

In particular for a tree, this polynomial would always vanish, as
every tree has a leaf. However one may remove the vertex leaves, to get
trees where the ``leaves'' are edges. This gives non-trivial polynomials.

\begin{center}
\begin{tikzpicture}[scale = 0.7]
\draw[] (0,0)--(-0.5,0.75) node[anchor=west] at (-0.33,0.5){};
\draw[] (0,0)--(-0.5,-0.75) node[anchor=west] at (-0.33,-0.5)  {};
\draw[] (0,0)--(1,0) node[anchor=north] at (0.5,0)  {};
\draw[] (1,0)--(2,0) node[anchor=north] at (1.5,0)  {};
\filldraw (0,0) circle (2pt);
\filldraw (-0.5,0.75) circle (2pt);
\filldraw (-0.5,-0.75) circle (2pt);
\filldraw (1,0) circle (2pt);
\filldraw (2,0) circle (2pt);

\draw node at (3,0) {$\rightsquigarrow$};

\draw[] (4,0)--(3.5,0.75) node[anchor=west] at (3.67,0.5){};
\draw[] (4,0)--(3.5,-0.75) node[anchor=west] at (3.67,-0.5)  {};
\draw[] (4,0)--(5,0) node[anchor=north] at (4.5,0)  {};
\draw[] (5,0)--(6,0) node[anchor=north] at (5.5,0)  {};
\filldraw (4,0) circle (2pt);
\filldraw (5,0) circle (2pt);

\end{tikzpicture}
 \end{center}

\medskip
From the four double bialgebras
\[ 
	H^\circ, \quad H^d, \quad H^c, \quad H^{cd} 
\]
we get four chromatic polynomials associated to every hypergraph
$(E,V)$ in $H$:
\[ 
	\chi_{E,V}(x), \quad \chi^d_{E,V}(x), \quad \chi^c_{E,V}(x), \quad
  \chi^{cd}_{E,V}(x) . 
\]





\subsection{Examples}

\begin{example}
  Consider the hypergraph $h$ with $a$ vertices and $m$ edges, each edge
  containing all the vertices, alternatively phrased: We have an
  $m$-tuple edge containing all vertices. We
  represent this below to the left, together with its derived hypergraphs:
  \[ 
  	h = \pointli{a}{m}, \quad h^d = \pointli{m}{a}, \quad
    h^c = \pointe{a}{m},
    \quad h^{cd} = \pointe{m}{a}. 
 \]
  In the last two hypergraphs the edges are empty  and vertices are
  non-incident. Note that (for $a,m \geq 1$)
  $h$ is in $H^\circ$ and $H^d$ but not in $H^c$ nor $H^{cd}$. 
  As $0,1$-matrices, $h$ is the $m\times a$-matrix
  with all entries $1$, and $h^d$ its transpose. Further $h^c$ is the
  $m \times a$-matrix with all entries $0$ and $h^{cd}$ its transpose.
  Their chromatic polynomials are (for $a,m \geq 1 $):
 \[ 
 	\chi(x) = x^a - x, \quad \chi^d(x)= x^m - x, \quad \chi^c(x) = x^a,
    \quad \chi^{cd}(x) = x^m. 
 \]
\end{example}

\begin{example}
Consider the hypergraph $h$ given schematically as:
\[ 
  	h = \trekantl{a}{b}{c}{\ell}{m}{n}. 
\]
  By this we mean vertex sets $A,B,C$ consisting of $a,b$ and $c$ vertices,
  an $\ell$-multiple edge consisting of $A \cup B$, an $m$-multiple edge
  consisting of $A \cup C$ and an $n$-multiple edge consisting of
  $B \cup C$. Its derived hypergraphs are:
  \[ 
  	h^c = \pointli{c}{\ell} \, \pointli{b}{m} \, \pointli{a}{n}, \quad
    	h^d = \trekantl{\ell}{m}{n}{a}{b}{c}, \quad
    	h^{cd} = \pointli{\ell}{c} \, \pointli{m}{b} \, \pointli{n}{a}
      \]
      If all integers here are positive, these are in all four double
  bialgebras and the 
  chromatic polynomials are:
  \begin{align*} 
  \chi(x) 	&= x^{a+b+c} - x^{a+1} - x^{b+1} - x^{c+1} + 2x\\
  \chi^c(x) &= (x^a-x)(x^b-x)(x^c-x) \\
    \chi^d(x) &=  x^{\ell + m + n} - x^{\ell+1} - x^{m+1} - x^{n+1} + 2x\\
   \chi^{cd}(x) &= (x^\ell-x)(x^m-x)(x^n-x).
      \end{align*}
\end{example}

\begin{example} 
All trees with $n$ edges have the same chromatic
  polynomial $x(x-1)^n$. However the four chromatic polynomials
  refine things considerably. There are two trees with three edges,
  the path and the star:
 \begin{center}
\begin{tikzpicture}[scale = 0.7]
\draw[] (0,0)--(1,0) node[anchor=west] at (-0.33,0.5){};
\draw[] (1,0)--(2,0) node[anchor=west] at (-0.33,-0.5)  {};
\draw[] (2,0)--(3,0) node[anchor=north] at (0.5,0)  {};
\filldraw (0,0) circle (2pt);
\filldraw (1,0) circle (2pt);
\filldraw (2,0) circle (2pt);
\filldraw (3,0) circle (2pt);

\draw[] (6,0)--(7,0) node[anchor=west] at (-0.33,0.5){};
\draw[] (7,0)--(7.5,0.85) node[anchor=west] at (-0.33,-0.5)  {};
\draw[] (7,0)--(7.5,-0.85) node[anchor=north] at (0.5,0)  {};
\filldraw (6,0) circle (2pt);
\filldraw (7,0) circle (2pt);
\filldraw (7.5,0.85) circle (2pt);
\filldraw (7.5,-0.85) circle (2pt);
\draw node at (7.7,0) {.};
\end{tikzpicture}

\end{center}

The chromatic polynomials of the path are:
\begin{equation*} \chi(x) = x(x-1)^3,
    \quad  \chi^d(x)= 0, \quad 
    \chi^c(x) =  x(x-1)^3,
    \quad  \chi^{cd}(x) = 0.
\end{equation*}

The chromatic polynomials of the star are:
\begin{equation*} \chi(x) = x(x-1)^3,
    \quad  \chi^d(x)= 0, \quad 
    \chi^c(x) =  x^2(x-1)(x-2),
    \quad \chi^{cd}(x) = x(x-1)(x-2).
  \end{equation*}
\end{example}


\subsection{When all chromatic polynomials are zero}

It is not hard to come up with examples where all four chromatic
polynomials vanish. The chromatic polynomial $\chi_h(x)$ of a hypergraph
$h$ vanishes precisely when it has an edge with exactly one vertex.
Using the representation of the hypergraph as $0,1$-matrix $h$,
we then have:
\begin{itemize}
  \item $\chi_h(x) = 0$ iff $h$ has a row with exactly one $1$.
  \item $\chi_h^d(x) = 0$ iff $h$ has a column with exactly one $1$.
  \item $\chi_h^c(x) = 0$ iff $h$ has a row with exactly one $0$.
  \item $\chi_h^{cd}(x) = 0$ iff $h$ has a column with exactly one $0$.
\end{itemize}

Using this any hypergraph may be extended, adding two vertices and two edges, to a hypergraph where all chromatic polynomials vanish. One needs only add rows and columns with the above properties to the $0,1$-matrix.

 
\section{Further bialgebras}
\label{sec:newq}

We give three more quartets of bialgebras of hypergraphs: The first consisting
of restriction-descent bialgebras, the second of descent
bialgebras, and the last of extraction-contraction bialgebras.


\subsection{Restriction-descent bialgebras}
\label{subsec:res-descent}
Ardila and Aguiar in \cite[Subsec.3.1]{AA} introduce a variation on
Schmitt's Hopf algebra
of graphs \cite{Schmitt}. This generalizes readily to hypergraphs
\cite[Subsec.20.1]{AA}.
Aguiar and Ardila essentially do not allow empty edges. In the graph
case they do not have empty edges. In the hypergraph case they require
the hypergraph to have exactly one empty edge. But these are of course
in bijection with hypergraphs with no empty edge.

We give a slightly more general version of this in that we do not have
any restriction on the edges. 

For a subset $U$ of $V$, let $U^c$ be the complement.
We get a hypergraph $(E,U^c)$ by the composite
$E \pil P(V) \pil P(U^c)$. 
We get a bialgebra of hypergraphs $(H,\mu,\Delta^\prime, \eta, \eps)$
where the coproduct is:
\[
  (E,V) \overset{\Delta^\prime} \longmapsto
  \sum_{U \sus V} (E_{|U}, U) \te (E, U^c). 
\]
For the edge this coproduct would
now be:
\begin{equation} \label{eq:pro-cop3}
  \linvert \, \overset{\Delta^\prime}\longmapsto \, \een \te \linvert +
  2 \bullet \te \pointl + \linvert \te \edge \, ,
\end{equation}
while in $(\Hy, \mu, \Delta)$ the coproduct is:
\[ 
	\linvert \, \overset{\Delta}\longmapsto \, \een \te \linvert +
  2 \bullet \te \bullet + \linvert \te \een.
\]
The counit $\eps$ is as before:
\[ 
	(E,V) \overset{}{\mapsto} \begin{cases} 1, & V = \emptyset \\
    0, & \text{ otherwise } \end{cases}. 
\]
As in Section \ref{sec:hyp} we get four bialgebras:
\[ (\Hy, \mu, \eta, \Delta^\prime, \eps), \quad
  (\Hy, \mu, \eta, \Delta^{\prime d}, \eps^{d}), \quad
  (\Hy, \mu^c, \eta, \Delta^{\prime c}, \vep^{c}), \quad
  (\Hy, \mu^c, \eta, \Delta^{\prime cd}, \vep^{cd}). 
\]

\medskip
By sending a hypergraph $(E,V) \mapsto (E^*,V)$ (i.e. omit all empty edges),
we get a connected {\it quotient} bialgebra
$(H^\circ, \mu, \Delta^\prime)$ now sending the edge to:
\begin{equation*} 
  \linvert \, \overset{\Delta^\prime}\longmapsto \, \een \te \linvert +
  2 \bullet \te \pointl + \linvert \te \een.
\end{equation*}
This is essentially the Hopf algebra of hypergraphs in \cite[Subsec.~20.1]{AA}.

Let $\He$ be the hypergraphs with empty vertex set and coproduct:
\[ (E,\emptyset) \overset{\Delta^\emptyset}{\mapsto} (E,\emptyset) \te
    (E,\emptyset). \]
  Tensoring the bialgebras on $H^\circ$ and $\He$ we get
  another bialgebra structure on $H = H^\circ \te H^\emptyset$.
  However we consider
  $(H, \mu, \Delta^\prime, \eta, \eps)$ to be the fundamental one
  as the others are derived from it.

  \begin{remark} \label{rem:res-contract}
    To a Hopf monoid $\bH$ in vector species and a character $\zeta : \bH \pil
    \kk$, and $h \in \bH[I]$,
    where $I$ is a finite set, in \cite[Sec.~16]{AA} 
    is associated a polynomial $\chi_I(h)(x) \in \QQ[x]$. This construction
    is also investigated in detail in \cite{Fo22}. In particular there
    are in many cases \cite[Sec.~17]{AA} a natural {\it basic character} $\zeta$ 
    giving a {\it basic polynomial invariant}. For graphs this becomes
    the chromatic polynomial.

    The Hopf monoid of hypergraphs considered in \cite{AA} corresponds
    to the restriction-contraction Hopf algebra
    $(H^\circ, \mu, \Delta^\prime)$. 
    Aval, Karabaghossian, and Tanasa \cite{AKT} consider this for the basic character, which to
    a hypergraph associates $0$ if the hypergraph has an edge with two
    or more vertices, and associates $1$ if every edge in the hypergraph
    has a single vertex. To a hypergraph $h$ with vertex set $I$ is then associated a polynomial
    $\chi_I^{AKT}(h)$ which is distinct from the hypergraph chromatic polynomial
    $\chi_h$ we consider here. This latter polynomial $\chi_h$
    is associated to the basic
    character of the restriction-restriction bialgebra $(H^\circ, \mu, \Delta)$.

    The character $\chi^{AKT}$ gives (see \cite{Fo22}) a bialgebra morphism
    and $\chi$ gives a {\it double} bialgebra morphism
    \[ \chi^{AKT} : (H^\circ, \mu, \Delta^\prime) \pil (\QQ[x], \mu, \Delta),
      \qquad  \chi : (H^\circ, \mu, \Delta, \delta) \pil
    (\QQ[x], \mu, \Delta, \delta). \]
  The first is one of many possible bialgebra morphisms.
  On the other hand the double bialgebra morphism $\chi$ is unique,
  \cite{Fo22}. The character $\chi^{AKT}$ is also considered in
  \cite[Prop.~2.2]{Fo-Hyp}.

  At the end of this section we pose the question of whether
$(H^\circ, \mu, \Delta^\prime)$ has a cointeracting bialgebra.
  \end{remark}

\subsection{Descent-descent bialgebras}  
    \label{subsec:des-des}
One may also get a bialgebra of hypergraphs
$(H,\mu,\Delta^{\prime\prime}, \eta, \eps)$
where the coproduct is:
\[ 
	(E,V) \overset{\Delta^{\prime \prime}} \longmapsto
  \sum_{U \sus V} (E, U) \te (E, U^c). 
\]
For the edge this coproduct would
now be:
\begin{equation*} 
  \linvert \, \overset{\Delta^\prime}\longmapsto \, \edge \te \linvert +
  2 \pointl \te \pointl + \linvert \te \edge \, .
\end{equation*}
The counit $\eps$ is as above.

In this case the coproduct we get by complementing $\Delta^{\prime \prime \,c}$
is the same as $\Delta^{\prime \prime}$. However the products
$\mu$ and $\mu^c$ are distinct, the latter given as in Subsection
\ref{subsec:hyp-products}.  
So we get again four distinct bialgebras:

\[ (\Hy, \mu, \eta, \Delta^{\prime \prime}, \eps), \quad
  (\Hy, \mu, \eta, \Delta^{\prime \prime \, d}, \eps^{d}), \quad
(\Hy, \mu, \eta, \Delta^{\prime \prime \, c}, \eps), \quad
  (\Hy, \mu, \eta, \Delta^{\prime \prime \, cd}, \eps^{d}).  
\]

Again by sending a hypergraph $(E,V) \mapsto (E^*,V)$
(i.e. omit empty edges),
we get a connected {\it quotient} bialgebra
$(H^\circ, \mu, \Delta^{\prime \prime})$ now sending the edge to:
\begin{equation*} 
  \linvert \, \overset{\Delta^\prime}\longmapsto \, \een \te \linvert +
  2 \pointl \te \pointl + \linvert \te \een.
\end{equation*}

\noindent {\it Note:} The above descent-desecent bialgebra was added to the
submission version of this article a few months after the first version
appeared on the arXiv.
At practically the same day there appeared an arXiv preprint \cite{Fo-Hyp}
by L.~Foissy, where he also introduces the above descent algebra.


\subsection{Restriction-contraction bialgebras II}
\label{subsec:res-contract2}
In \cite{Fo-Hyp} mentioned above, L.~Foissy,
shows that the above descent-descent bialgebra has a 
cointeracting bialgebra, an extraction-contraction bialgebra,
\cite[Section 1.3]{Fo-Hyp}. We give this bialgebra
with some slight modifications to our setting.

For a surjection $p : V \ppil W$ and $w \in W$, let $V_w = p^{-1}(w)$ be
the inverse image. Since $V_w \sus V$ we get a restriction map
$P(V) \pil P(V_w)$. A hypergraph $(E,V)$ induces a hypergraph $(E,V_w)$
via the composition $E \pil P(V) \pil P(V_w)$. Let $E_w \sus E$ be the
subset of edges which contain a vertex from $V_w$. We now define
a comodule map $H \overset{\delta^{\prime \prime}}{\pil} H^\circ \te H$ by sending
\begin{equation} \label{eq:newq-cop}
  (E,V) \mapsto \underset{V \ppil W}{\bigoplus}
  (\prod_{w \in W} (E_w,V_w)) \te (E,W),
\end{equation}
where we sum over all sujections $V \ppil W$ such that each $(E_w, V_w)$ is
a connected hypergraph. In particular, an edge $| \mto{\delta^{\prime \prime}}
1 \te |$.

\noindent {\it Note}: The condition of connectedness is only needed for
the product and coproduct to interact well: Then the coproduct is
an algebra morphism.

\begin{example}
We have the following:
\[ \pointll \overset{\delta^{\prime \prime}} \mapsto \pointll \te \pointll, \quad
    \pointll \overset{\Delta^{\prime \prime}} \mapsto
      || \te \pointll + \pointll \te ||. 
  \]
  On the other hand for the cointeracting bialgebras of
  Theorem \ref{thm:EC-cointer1}:
  \[ \pointll \overset{\delta} \mapsto \bullet \te \pointll + 2 \pointl \te
    \pointl + \pointll \te \bullet , \quad
    \pointll \overset{\Delta} \mapsto
      \ben \te \pointll + \pointll \te \ben. 
  \]
\end{example}
  
We define $\vareps^{\prime \prime} : H^\circ \pil \kk$ by sending all hypergraphs
whose connected components have a single vertex, to $1$.
Restricting the above map \eqref{eq:newq-cop} to $H^\circ$ we get a coproduct
$H^\circ \overset{\delta^{\prime \prime}}{\pil} H^\circ \te H^\circ$.

  \begin{proposition}[L.~Foissy \cite{Fo-Hyp}, but slightly more general]
    \label{pro:newq-ex}
$(H^\circ, \mu, \delta^{\prime \prime}, \eta,
\vareps^{\prime \prime})$ is a bialgebra, and the bialgebra $(H, \mu, \Delta^{\prime \prime}, \eta, \eps)$ is a comodule bialgebra over the former.

In consequence $(H^\circ, \mu, \Delta^{\prime \prime},
\delta^{\prime \prime})$ is a double bialgebra.
\end{proposition}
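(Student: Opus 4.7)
The strategy is to reduce every required identity to a bijection between sets of surjections satisfying connectedness conditions, then match the associated tensor factors. One first establishes that $(H^\circ,\mu,\delta^{\prime\prime},\eta,\vareps^{\prime\prime})$ is a bialgebra and then verifies the four comodule-bialgebra axioms from Section \ref{sec:coint}; the double bialgebra statement then follows automatically. Coassociativity of $\delta^{\prime\prime}$ comes from identifying both iterated coproducts with sums over composable surjections $V \ppil W_1 \ppil W_2$: in $(\id \te \delta^{\prime\prime}) \circ \delta^{\prime\prime}$ one imposes connectedness of fibers of $V \ppil W_1$ in $(E,V)$ and of fibers of $W_1 \ppil W_2$ in $(E,W_1)$, while in $(\delta^{\prime\prime} \te \id) \circ \delta^{\prime\prime}$ the second coproduct is applied inside each connected fiber $(E_{w_1},V_{w_1})$; a direct check shows the two connectedness conditions coincide. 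The counit property holds because among all surjections $V \ppil W$ only the identity contributes nontrivially: it returns the disjoint union of singleton hypergraphs $(E_v,\{v\})$, all of whose edges are loops, on which $\vareps^{\prime\prime}$ equals $1$, while any surjection with a fiber of size $\geq 2$ forces a non-loop edge in the first factor by connectedness, and is annihilated. Multiplicativity of $\delta^{\prime\prime}$ and axiom (4) both follow because a connected fiber of a surjection from $V \sqcup V'$ must lie entirely in $V$ or entirely in $V'$, so any such surjection splits as a product of surjections on the two components. Axioms (1) and (3) are immediate computations on the empty hypergraph.

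The heart of the argument is axiom (2), the identity $(\id \te \Delta^{\prime\prime}) \circ \delta^{\prime\prime} = m_{13,2,4} \circ (\delta^{\prime\prime} \te \delta^{\prime\prime}) \circ \Delta^{\prime\prime}$. Applied to $(E, V)$, the left side is a sum over pairs (surjection $p \colon V \ppil W$ with connected fibers, subset $U \sus W$) of terms $\prod_{w}(E_w,V_w) \te (E,U) \te (E,U^c)$. The right side, after the multiplication $m_{13,2,4}$, is a sum over triples (subset $U' \sus V$, surjections $U' \ppil T_1$ and $U'^c \ppil T_2$ both with connected fibers) of the corresponding first-factor products tensored with $(E,T_1) \te (E,T_2)$. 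The bijection pairs these data by $U' := p^{-1}(U)$, noting that the restrictions of $p$ give surjections $U' \ppil U =: T_1$ and $U'^c \ppil U^c =: T_2$; inversely, one assembles $W := T_1 \sqcup T_2$ and $U := T_1$ from the source-side data. Under this correspondence the second and third tensor factors match verbatim, and the first-factor products match because the fiber $(E_w, V_w)$ of $(E, V)$ coincides as a hypergraph with the fiber of $(E, U')$ (respectively $(E, U'^c)$) over the identified vertex set, both consisting of the edges of $E$ touching that vertex set with identical induced incidences.

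The main obstacle is justifying that the connectedness conditions really match on the two sides of this bijection. The crucial observation is that no fiber of $p \colon V \ppil W$ can straddle $U'$ and $U'^c$, since $U' = p^{-1}(U)$ is by construction a union of complete fibers. Each fiber therefore sits as a sub-hypergraph of either $(E, U')$ or $(E, U'^c)$, and its connectedness there equals its connectedness as a sub-hypergraph of $(E, V)$, because the edge–vertex incidence of $E$ on that common vertex set does not change when one restricts the ambient vertex set. Once this is secured, the bijection is an honest bialgebra identity and Theorem 2.1 of \cite{Fo22} delivers the double bialgebra conclusion.
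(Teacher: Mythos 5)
Your proposal is correct and follows essentially the same route as the paper: both iterated coproducts are identified with sums over composable surjections with connected fibers, and the cointeraction axiom (2) is verified by matching the surjection-plus-subset data on the two sides after the multiplication $m_{13,2,4}$. You additionally spell out the counit verification and the reason the connectedness conditions transfer across the bijection (fibers never straddle $U'$ and $U'^{c}$, and the induced fiber hypergraphs are unchanged), details the paper leaves implicit; these additions are sound.
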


\begin{remark}
The extraction-contraction bialgebra in \cite[Section 1.3]{Fo-Hyp} 
is (essentially) a quotient of this bialgebra $(H^\circ, \mu, \delta^{\prime \prime})$. First quotient it by:
\begin{itemize}
 \item The differences between a hypergraph and its reduced version by
   removing: \begin{itemize}
             \item multiple edges (but keeping one edge),
             \item any edge containing a single vertex.
   \end{itemize} \end{itemize}
 Then modify every hypergraph in the resulting bialgebra
 by:  \begin{itemize}
      \item adding an empty edge, and
      \item   replace every one vertex  isolated component $(\emptyset, v)$ with
  $(\{v\},v)$.
\end{itemize}
\end{remark}

  \begin{proof}[Proof of Proposition \ref{pro:newq-ex}.]
    1. We first see that $\delta^{\prime \prime}$ is coassociative.
  Applying $\id \te \delta^{\prime \prime}$ to \eqref{eq:newq-cop}
we get 
\begin{equation} \label{eq:newq-ende}  (\prod_{w \in W} (E_w,V_w)) \te (\prod_{x \in X} (E_x,W_x)) \te (E,X).\end{equation}
where we sum over surjections $V \ppil W \ppil X$ with fiber hypergraphs
connected.
Applying $\delta^{\prime \prime} \te \id$ to 
\[ (\prod_{x \in X} (E_x,V_x)) \te (E,X),\] 
we get
\begin{equation} \label{eq:newq-deen}
  (\prod_{w \in W} (E_{x,w},V_{x,w}))
  \te (\prod_{x \in X} (E_x,W_x)) \te (E,X).
  \end{equation}
Where we sum over surjections $V_x \ppil W_x$ with fiber hypergraphs 
connected. We then see that \eqref{eq:newq-ende} and
\eqref{eq:newq-deen} become equal. (We do not really need
here that the fiber hypergraphs are connected. This is used
in the next part.)

\medskip
2. To show the cointeraction, note first that Property 1) of
Section \ref{sec:coint} holds just like in the proof of Theorem
\ref{thm:EC-cointer1}.
We then need to show Property 2). Let us describe the composite
$(\id \te \Delta^{\prime \prime}) \circ \delta^{\prime \prime}$. By applying $(\id \te \Delta^{\prime \prime})$ to
\eqref{eq:newq-cop} we get
\begin{equation} \label{eq:newq-1Dd}
  \bigoplus_{\begin{matrix} V \ppil W \\ W = W_1 \sqcup W_2 \end{matrix}}
  (\prod_{w \in W} (E_w,V_w)) \te (E,W_1) \te (E,W_2).
\end{equation}

The coproduct $\Delta^{\prime \prime}$ sends $(E,V)$ to:
\[ \underset{V = V_1 \sqcup V_2}\bigoplus (E,V_1) \te (E,V_2).\]
The composition $(\delta^{\prime \prime} \te \delta^{\prime \prime}) \circ
\Delta^{\prime \prime} $ sends this further
to
\[ \bigoplus_{\begin{matrix} V_1 \ppil W_1\\ V_2 \ppil W_2 \end{matrix}}
  (\prod_{w \in W_1} (E_w,V_w)) \te (E,W_1) \te
  (\prod_{w \in W_2} (E_w,V_w)) \te (E,W_2). \]
Multiplying the first and third term, this corrsponds to the terms
in \eqref{eq:newq-1Dd}.
\end{proof}

We thus get a double bialgebra morphism
\[ \chi^{\prime \prime} : (H^\circ, \mu, \Delta^{\prime \prime},
  \delta^{\prime \prime}) \pil  (\QQ[x], \cdot, \Delta, \delta). \]
Foissy \cite{Fo-Hyp} shows that the image of a hypergraph $h$
is the classical chromatic polynomial for graphs, if each edge is replaced
by the complete simple graph on its set of vertices. So
$\chi_h^{\prime \prime}(n)$ counts the number of colorings with $n$ colors
where each edge is {\it rainbow}: all its vertices have different
colors.
In particular any hypergraph $h$ with exactly one vertex (and no empty edges)
has polynomial $\chi_h^{\prime \prime}(x) = x$, irrespective of how
many edges it has.

\begin{example} 
  The hypergraphs $\trekantll$ and $\treell$ have the same
  $\chi^{\prime \prime}$-chromatic polynomials.
  However all four $\chi^{\prime \prime}$-chromatic polynomials refines the view.
  The first gives the quartet:
  \[ h : \trekantll, \quad h^d : \treel, \quad h^c : \veel \pointl,
    \quad h^{cd} : \edgell \pointl,  \]
  with associated polynomials:
  \[ x(x-1)(x-2)^2, \quad x(x-1)(x-2), \quad x^2(x-1)^2, \quad x^2(x-1) . \]
  The second gives the quartet:
  \[ g : \treell, \quad h^d : \bbox, \quad h^c : \treeltl, \quad h^{cd} : \bboxt. \]
  with associated polynomials:
  \[ x(x-1)(x-2)^2, \quad x(x-1)(x-2)(x^2-5x + 7), \quad x(x-1)(x-2)^2,
    \quad x(x-1)(x-2)(x^2-5x+7) . \]
\end{example}


\subsection{Question on cointeraction}

The bialgebra $(H^\circ,\mu,\Delta^\prime)$
does not seem to come with a
cointeraction.
To see the problem, consider the single edge. The natural coproduct
on a cointeracting algebra would be:
\[ \linvert \, \overset{\delta^\prime}\longmapsto \, \tobull \te \linvert +
  \linvert \te \bullet. \]
Considering the half-edge we should also expect to
sum over the subsets of the
edges, giving the two term coproduct:
\begin{equation} \label{eq:pro-half}
  \pointl \, \overset{\delta^\prime}\longmapsto \, \bullet \te \pointl + \pointl
  \te \bullet.
  \end{equation}
  
  The requirement for cointeraction is the equation
\begin{equation*} 
  (\ben_B \te \Delta^\prime) \circ \delta^\prime
  = m_{13,2,4} \circ (\delta^\prime \te \delta^\prime)
\circ \Delta^\prime.
  \end{equation*}
  However applying this to the edge in $(\Hy^\circ, \mu, \Delta^\prime)$, the left
  side gets $6$ terms, and the right side $8$ terms.
  The map $\delta^\prime \te \delta^\prime $ applied to \eqref{eq:pro-cop3} gives
  two extra terms due to the half-edge \eqref{eq:pro-half},
  compared to $\delta \te \delta$ in $(\Hy^\circ, \mu, \Delta)$.

\begin{question}
  Is it possible in some way to get cointeraction for the bialgebra
  $(\Hy^\circ, \mu, \Delta^\prime)$? If not, why is it not possible?
\end{question}

\begin{remark} \cite[Prop. 2.7]{Fo-Hyp} shows, under some extra conditions,
  that there cannot be a cointeraction.
\end{remark}

All the bialgebras we give are commutative. Some are co-commutative and some
not.
\begin{question} 
  Are there bialgebras of hypergraphs which are not commutative and not
  co-commutative?
\end{question}

\bibliographystyle{amsplain}
\bibliography{biblio}

\providecommand{\bysame}{\leavevmode\hbox to3em{\hrulefill}\thinspace}
\providecommand{\MR}{\relax\ifhmode\unskip\space\fi MR }
\providecommand{\MRhref}[2]{%
  \href{http://www.ams.org/mathscinet-getitem?mr=#1}{#2}
}
\providecommand{\href}[2]{#2}
\begin{thebibliography}{10}

\bibitem{AA}
Marcelo Aguiar and Federico Ardila, \emph{Hopf monoids and generalized
  permutahedra}, arXiv preprint arXiv:1709.07504 (2017).

\bibitem{AM}
Marcelo Aguiar and Swapneel~Arvind Mahajan, \emph{{Monoidal functors, species
  and Hopf algebras}}, vol.~29, American Mathematical Society Providence, RI,
  2010.

\bibitem{AKT}
Jean-Christophe Aval, Th{\'e}o Karaboghossian, and Adrian Tanasa, \emph{{The
  Hopf Monoid of Hypergraphs and its Sub-Monoids: Basic Invariant and
  Reciprocity Theorem}}, The Electronic Journal of Combinatorics \textbf{27}
  (2020), no.~1.

\bibitem{Ber}
Claude Berge, \emph{Graphs and hypergraphs}, North-Holland Pub. Co., 1973.

\bibitem{CEM}
Damien Calaque, Kurusch Ebrahimi-Fard, and Dominique Manchon, \emph{{Two
  interacting Hopf algebras of trees: a Hopf-algebraic approach to composition
  and substitution of B-series}}, Advances in Applied Mathematics \textbf{47}
  (2011), no.~2, 282--308.

\bibitem{CK}
Alain Connes and Dirk Kreimer, \emph{Hopf algebras, renormalization and
  noncommutative geometry}, Quantum field theory: perspective and prospective,
  Springer, 1999, pp.~59--109.

\bibitem{Di}
Irit Dinur, Oded Regev, and Clifford Smyth, \emph{The hardness of 3-uniform
  hypergraph coloring}, The 43rd Annual IEEE Symposium on Foundations of
  Computer Science, 2002. Proceedings., IEEE, 2002, pp.~33--40.

\bibitem{Doh}
Klaus Dohmen, \emph{A broken-circuits-theorem for hypergraphs}, Archiv der
  Mathematik \textbf{64} (1995), no.~2, 159--162.

\bibitem{DFM}
Cl{\'e}ment Dupont, Alex Fink, and Luca Moci, \emph{{Universal Tutte characters
  via combinatorial coalgebras}}, Algebraic Combinatorics \textbf{1} (2018),
  no.~5, 603--651.

\bibitem{EM}
Kurusch Ebrahimi-Fard, Fr{\'e}d{\'e}ric Fauvet, and Dominique Manchon, \emph{A
  comodule-bialgebra structure for word-series substitution and mould
  composition}, Journal of Algebra \textbf{489} (2017), 552--581.

\bibitem{EH}
Paul Erd{\H{o}}s and Andr{\'a}s Hajnal, \emph{On chromatic number of graphs and
  set-systems}, Acta Math. Acad. Sci. Hungar \textbf{17} (1966), no.~61-99, 1.

\bibitem{Fa}
Edward~J Farrell, \emph{On a general class of graph polynomials}, Journal of
  Combinatorial Theory, Series B \textbf{26} (1979), no.~1, 111--122.

\bibitem{FFM}
Fr{\'e}d{\'e}ric Fauvet, Lo{\"\i}c Foissy, and Dominique Manchon, \emph{{The
  Hopf algebra of finite topologies and mould composition}}, Annales de
  l'Institut Fourier, vol.~67, 2017, pp.~911--945.

\bibitem{Fl}
Gunnar Fl{\o}ystad, \emph{{Profunctors between posets and Alexander duality}},
  {arXiv preprint arXiv:2104.02767, to appear in Applied Categorical
  Structures} (2023).

\bibitem{FoChrom}
Lo{\"\i}c Foissy, \emph{Chromatic polynomials and bialgebras of graphs},
  International Electronic Journal of Algebra \textbf{30} (2021), 116--167.

\bibitem{Fo22}
\bysame, \emph{Bialgebras in cointeraction, the antipode and the eulerian
  idempotent}, arXiv preprint arXiv:2201.11974 (2022).

\bibitem{Fo-Hyp}
\bysame, \emph{Hopf algebraic structures on hypergraphs and multi-complexes},
  arXiv preprint arXiv:2304.00810 (2023).

\bibitem{ACT}
Brendan Fong and David~I Spivak, \emph{An invitation to applied category
  theory: seven sketches in compositionality}, Cambridge University Press,
  2019.

\bibitem{Hel}
Thorkell Helgason, \emph{Aspects of the theory of hypermatroids}, Hypergraph
  seminar, Springer, 1974, pp.~191--213.

\bibitem{KMT}
Thomas Krajewski, Iain Moffatt, and Adrian Tanasa, \emph{{Hopf algebras and
  Tutte polynomials}}, Advances in Applied Mathematics \textbf{95} (2018),
  271--330.

\bibitem{Su}
Michael Krivelevich and Benny Sudakov, \emph{Approximate coloring of uniform
  hypergraphs}, Journal of Algorithms \textbf{49} (2003), no.~1, 2--12.

\bibitem{Man12}
Dominique Manchon, \emph{{On bialgebras and Hopf algebras of oriented graphs}},
  Confluentes Mathematici \textbf{4} (2012), no.~1.

\bibitem{Man}
\bysame, \emph{{A Review on Comodule-Bialgebras}}, Computation and
  Combinatorics in Dynamics, Stochastics and Control (Elena Celledoni, Giulia
  Di~Nunno, Kurusch Ebrahimi-Fard, and Hans Munthe-Kaas, eds.), Abel Symposia,
  vol.~13, Springer, Cham, 2018, pp.~579--597.

\bibitem{Sa}
Bruce~E Sagan, \emph{The amazing chromatic polynomial}, The Mathematical
  Intelligencer (2022), 1--9.

\bibitem{Schmitt}
William~R Schmitt, \emph{{Incidence Hopf algebras}}, Journal of Pure and
  Applied Algebra \textbf{96} (1994), no.~3, 299--330.

\bibitem{Tom}
Ioan Tomescu, \emph{Chromatic coefficients of linear uniform hypergraphs},
  Journal of Combinatorial Theory, Series B \textbf{72} (1998), no.~2,
  229--235.

\bibitem{ZD}
Ruixue Zhang and Fengming Dong, \emph{Properties of chromatic polynomials of
  hypergraphs not held for chromatic polynomials of graphs}, European Journal
  of Combinatorics \textbf{64} (2017), 138--151.

\end{thebibliography}
\end{document}